\theoremstyle{plain}
\newtheorem{thm}{Theorem}[section]
\newtheorem{coro}[thm]{Corollary}
\newtheorem{prop}[thm]{Proposition}
\theoremstyle{definition} \theoremstyle{definition}
\newtheorem{defn}[thm]{Definition}
\newtheorem{exam}[thm]{Example}
\newtheorem{rem}[thm]{Remark}           
\theoremstyle{remark}
\def\vol{{\rm vol}}
\newcommand{\De}{\Delta}
\newcommand{\de}{\delta}
\newcommand{\La}{\Lambda}
\newcommand{\al}{\alpha}
\newcommand{\C}{\mathbb C}
\newcommand{\Q}{\mathbb Q}
\newcommand{\R}{\mathbb R}
\def\cA{{\mathcal A}}
\def\cB{{\mathcal B}}
\def\cD{{D}}
\def\cT{{\mathcal T}}
\def\bdelta{{\boldsymbol{\delta}}}
\def\bDelta{{\boldsymbol{\Delta}}}
\def\bLambda{{\boldsymbol{\Lambda}}}
\def\sT{{\mathsf T}}
\def\sL{{\mathsf L}}
\def\sK{{\mathsf K}}
\def\G{{\mathsf G}}
\def\M{{\mathsf M}}
\def\P{{\mathsf P}}
\def\T{{\mathsf T}}
\def\B{{\mathsf B}}
\newcommand{\Ker}{{\rm Ker}}
\newcommand{\K}{{\mathsf K}}
\newcommand{\PGL}{{\rm PGL}}
\def\Val{{\rm Val}}
\def\ra{\rightarrow}
\def\A{{\mathbb A}}
\def\C{{\mathbb C}}
\def\bP{{\mathbb P}}
\def\Q{{\mathbb Q}}
\def\C{{\mathbb C}}
\def\N{{\mathbb N}}
\def\mX{{\mathfrak X}}
\def\Pic{{\rm Pic}}
\def\PGL{{\rm PGL}}
\def\zZ{{\mathcal Z}}
\def\eps{{\epsilon}}
\def\bfs{{\mathbf s}}
\def\sA{{\mathsf A}}
\def\sT{{\mathsf T}}
\def\sL{{\mathsf L}}
\def\sK{{\mathsf K}}
\begin{document}

\author{Ramin Takloo-Bighash}
\address{Department of Mathematics, UIC\\
851 S. Morgan str.\\
Chicago, IL 60607, U.S.A.}
\email{rtakloo@math.uic.edu}

\author{Yuri Tschinkel}
\address{Courant Institute of Mathematical Sciences, N.Y.U. \\
 251 Mercer str. \\
 New York, NY 10012, U.S.A.}
\email{tschinkel@cims.nyu.edu}

\title[Integral points]{Integral points of bounded height on compactifications of semi-simple groups}

\

\

\begin{abstract}
We study the asymptotic distribution of integral points of bounded height
on partial bi-equivariant compactifications of semi-simple groups of adjoint type.
\end{abstract}

\date{\today}

\maketitle

\vspace*{.5pc}
\hfill
{\it To Joe Shalika}

\tableofcontents

\section{Introduction}
\label{sect:intro}

In this paper, we study the asymptotic distribution of $S$-integral points of bounded height
on partial bi-equivariant compactifications of semi-simple groups of adjoint type.
Our approach combines the spectral techniques developed in \cite{STT} 
in the context of integral points with more recent results in 
\cite{C-T-integral}, \cite{C-T-Ga}, and \cite{C-T-Torus}.

Throughout, $\G$ is a split semi-simple group of adjoint type of rank larger
than one over a number field $F$ and $X$ the wonderful compactification
of $\G$. 
The boundary $X \setminus \G$ is a strict normal crossings divisor 
$\cup_{\alpha\in \mathcal A}D_{\alpha}$ whose set of irreducible components $\cA$
is in bijection with the set $\Delta=\Delta(\G,\T)$ of simple roots of $\G$, with respect to 
a fixed maximal split torus $\T$ of $\G$. 
Each $A\subseteq \Delta$ corresponds to the boundary stratum
$D_A:=\cap_{\alpha\in A} D_{\alpha}$. 
A rational character $\lambda \in \mathfrak X^*(\T)$ gives rise to a line bundle
$L_{\lambda}$ on $X$, the classes of such line bundles span the Picard group 
$\Pic(X)$ of $X$, after tensoring with $\Q$ (see \cite[Section 5]{STT}).
In \cite{STT} we defined a height pairing 
$$
H\colon \mathfrak X^*(\T)_{\C} \times \G(\A)  \rightarrow \C
$$
whose restriction to $\lambda \times \G(F)$ is a 
a standard Weil height function on $X$ with respect to $L_{\lambda}$. 
The height function is left- and right invariant with respect to the 
action of a maximal compact subgroup $\K=\prod_v \K_v\subset \G(\A)$ of the adelic points of $\G$.
We established an asymptotic formula for the number
$$
N(B,\lambda):=\# \{ \gamma\in \G(F) \,\mid \, H(\lambda,\gamma)\le B \},  \quad B\ra \infty,
$$
for all $\lambda$ corresponding to big line bundles on $X$, i.e., 
all classes in the interior of the effective cone in $\Pic(X)_{\R}$. 

From now on we fix a divisor 
$D\subset X\setminus \G$ and let $\mathcal A_D$ be the set of 
$\alpha$ such that 
$D=\cup_{\alpha\in \mathcal A_D} D_{\alpha}$. 
Let $S$ be a finite set 
of places of $F$ containing the archimedean places. We introduce the set-theoretic
characteristic function $\bdelta=\bdelta_{D,S}$ 
of the set of $(D,S)$-integral points on $X$ as follows:
For $v\notin S$, let
$$
\bdelta_v \colon \G(F_v)\ra \{ 0,1\}
$$
be given by 
$$
\bdelta_{v}(g_v) = 
\begin{cases} 
1 & \alpha(t_v) =1,
\forall \alpha \in \mathcal A_{\cD};\\ 
0 & \text{ otherwise},
\end{cases}
$$
where 
$$
g_v = k_v t_v k'_v, \quad \text{  with  } \quad t_v \in \T(F_v), k_v, k'_v \in \K_v.
$$ 
For $v\in S$, we put $\bdelta_v\equiv 1$ and write
$$
\bdelta:=\prod_v \bdelta_v.
$$ 
A point $\gamma \in \G(F)\subset \G(\A)$ is called $(\cD,S)$-integral if
$\bdelta(\gamma)$ is equal to $1$. This definition agrees with the usual definition 
of $S$-integral points on a model of $X\setminus D$ over the integers.
The corresponding counting functions are given by
$$
N_{S, \cD}(B,\lambda) =\# \{   (\cD,S)-\text{integral}\quad 
\gamma \in \G(F) \,\mid \, H(\lambda,\gamma)\le B\}.
$$
Our main result is Theorem~\ref{thm:main2} which shows that
$$
N_{S, \cD}(\lambda,B) = c\cdot B^a \log(B)^{b-1}(1+o(1)),   \quad B\ra \infty,
$$ 
with an explicit bound on the error term. 
The constants $a, b,$ and $c$ involve arithmetic, geometric, and combinatorial
information about the data $X,D$, $\lambda$, and $S$. 
Our Theorem~\ref{thm:main2} can be considered as an interpolation between results 
on {\em rational points} in \cite{STT} and \cite{GMO} and on {\em integral} 
points on $\G$ in \cite{GOS}, \cite{gorod-nevo}.  
It is consistent with a generalization of the conjecture of Batyrev-Manin and Peyre
to the framework of integral points proposed in 
\cite{C-T-integral}, \cite{C-T-Ga}, and \cite{C-T-Torus}.

\

As in \cite{STT} our method is based on the spectral analysis of
the height zeta function, modified to count $(\cD,S)$-integral points. 
We prove that the main poles of the height zeta function arise from 
the contribution of unramified one-dimensional automorphic 
representations of $\G(\A)$,
the set of which is denoted by $\mathcal X(\G)$.

\

In rank one, i.e., for $\G =\PGL(2)$, the 
spectral theory and the height integrals have
been completely worked out in \cite{PGL2}.
The wonderful compactification of $\PGL(2)$ is
the projective space $\bP^3$, with a unique irreducible 
boundary component which is a smooth split quadric, 
and the distribution of $(\cD,S)$-integral points 
is an interesting classical problem.

\

The study of $S$-integral points on algebraic varieties has a rich
history, going back at least as far as Siegel's work on algebraic
curves. More recently the distribution of integral points on
affine homogeneous varieties has been studied in several papers, e.g., in
\cite{duke-rudnick-sarnak, H-L-1,H-L-2}, 
as well as in \cite{E-M} where ergodic-theoretic methods are
employed. Ergodic-theoretic methods are also used in \cite{B-H} to
treat fairly general homogeneous varieties. Our method, in
technique and spirit, is quite close to \cite{C-T-Ga, C-T-Torus}.
An introduction to integral points for pairs $(X, D)$
can be found \cite{Hassett-Tschinkel-integral}. 
For background concerning the geometry of wonderful compactifications, see
\cite{D-P}. Arithmetic properties of these varieties,
in connection with counting of rational points of bounded height,
are discussed in
\cite{STT}. 

\

The paper is organized as follows. 
In Section~\ref{sect:heights} we review the theory of height functions from 
the group-theoretic perspective and study local and global 
height integrals. Section~\ref{sect:regularintegral} is devoted to the regularization of
adelic height integrals. Section \ref{sect:heightzeta} contains
our main theorem and its proof.

\

{\bf Acknowledgments.}
We are grateful to Wee-Teck Gan for helpful remarks. 
The first author was partially supported by NSF grant DMS-0701753.
The second author was partially supported by NSF grants DMS-0739380 and 
0901777.

\section{Basic notation}
\label{sect:notions}

For $X$ an algebraic variety over a field $F$ we write $X(F)$ for
the set of $F$-rational points of $X$. 
We denote by $\Pic(X)$ the Picard group of $X$ and 
by $\La_{\rm eff}(X)\subset \Pic(X)_{\R}$ the (closed) cone of effective
divisors on $X$. We often identify line bundles, divisors and
their classes in $\Pic(X)$.

From now on, we let $F$ be a number field, $\Val(F)$ the set its places, and
$S_{\infty}$ the set of archimedean places. For
$v\in \Val(F)$, let $F_v$ be the completion of $F$ with respect to
$v$, $\varpi_v$ a uniformizer at $v$,  and $q=q_v$ the order of the residue field. 
For any finite set of places $S$ (containing
$S_{\infty}$) we denote by $\mathfrak o_S$ the ring of $S$-integers of
$F$. We denote by $\A$ the ring of adeles, by 
$\A_S=\prod_{v\notin S}' F_v$ and by $\A_f=\A_{S_{\infty}}$. 
All Haar measures $\mathrm dg_v$ will be normalized as in \cite{Arthur}; 
we put $\mathrm dg_S= \prod_{v\notin S} \mathrm dg_v$ and 
$\mathrm dg = \prod_v \mathrm dg_v$.

\

We fix a connected split semi-simple
group $\G$ of adjoint type over $F$ or rank $r\ge 2$.  Let $\sT$
be a maximal split torus of $\G$ and  $\B\subset \G$
a Borel subgroup containing $\sT$. 
We also let $\Delta=\Delta(\G,T)$ 
be the set of simple roots for $(\G, \T)$, with respect to $\B$, 
and $\rho$ half the sum of positive roots. We have
$$
2\rho = \sum_{\alpha\in \De} \kappa_{\alpha} \alpha, 
$$
for some constants $\kappa_{\alpha}\ge 1$.
We set
$$
\hat{F}_v := \begin{cases}  \quad
 \{ x \in \R \, |\,  x \geq 1 \}  & \text{ if } v\mid \infty ,\\
  \quad  \{ \varpi_v^{-n} \, | \, n \in \N \} &  \text{ if } v\nmid \infty,
\end{cases}
$$
and
\begin{equation}
\T(F_v)^+ = \{ a \in \T(F_v) \, | \, \alpha(a) \in \hat{F}_v
\text{ for each } \alpha \in \Phi^+ \}.
\end{equation}
We have the following Cartan decomposition:
\begin{equation}
\G(F_v) = \K_v \T(F_v)^+ \K_v.
\end{equation}

\section{Heights and height integrals}
\label{sect:heights}

We identify the set $\cA=\{ D_{\alpha}\}$ of boundary components
with $\De=\Delta(\G,\T)$. 
The height pairing 
$$
\Pic(X)_{\C} \times \G(\A)\ra \C
$$
is given by 
$$
H({\bf s}, g) = \prod_v H_v({\bf s}, g_v), 
\quad \text{ where } \quad
H_v({\bf s}, g_v) = \prod_{\alpha \in \mathcal A} |\alpha(t_v)|_v^{s_\alpha}, 
$$
for  $\mathbf s= (s_{\alpha})_{\alpha \in \cA}$, and 
$$
g = (g_v)_v \in \G(\A) \quad \text{ with }  g_v = k_vt_vk_v', \quad 
k_v,k_v' \in \K_v,\quad t_v \in \T(F_v)^+, 
$$
(see Section 6 of \cite{STT}). We will also use
$$
H_S({\bf s}, g) : = \prod_{v \notin S} H_v({\bf s}, g_v).
$$
We have
$$
-K_X = \sum_{\alpha \in \mathcal A} (\kappa_{\alpha}+1)D_{\alpha},
$$
see \cite[Proposition 5.2]{STT}.
For $\epsilon\in \R$, let 
$$
\mathcal{T}_\epsilon =\{ \, {\bf s} \, \mid \,   
\Re(s_\alpha) > \kappa_{\alpha} + 1+\epsilon,\quad  \text{ for all }  \alpha\}
$$
and
$$
\mathcal{T}^\cD_\epsilon = \{ \, {\bf s}  \, \mid \, 
\Re(s_\alpha) > \kappa_{\alpha} + 1+\epsilon, \quad \text{ for all } 
\alpha \not\in \mathcal A_{\cD}\}.
$$

\

The following theorem has been proved in \cite[Sections 6.6 and 6.7]{STT}.

\begin{thm}
\label{ramified} 
Let 
$$
\mathcal{J}_v(\bfs) := \int_{\G(F_v)} H_v(\bfs, g_v)^{-1} \, \mathrm dg_v.
$$
\begin{enumerate} \item For $v\notin
S_{\infty}$ the integral $ \mathcal{J}_v(\bfs)$ is a holomorphic
function for $\bfs \in \cT_{-1}$.
\item For $v\in S_{\infty}$ and $\partial$ in
the universal enveloping algebra the integral
\begin{equation*}
\mathcal{J}_{v,\partial}(\bfs):= \int_{\G(F_v)} \partial( H_v(\bfs,
g_v)^{-1} )\,\mathrm  d g_v
\end{equation*}
is holomorphic for $\bfs \in \cT_{-1}$.
\end{enumerate}
Moreover, for all $\epsilon > 0$ and $\partial$, there exist 
constants $C_v(\eps)$ and  $C_v(\partial, \eps)$ such that 
$$
|\mathcal{J}_v(\bfs)| \leq
C_v(\eps) \quad \text{ and } \quad |\mathcal{J}_{v,\partial}(\bfs)| \leq C_v(\partial, \eps), 
\quad \text{ for all } \quad \bfs \in \cT_{-1 + \eps}.
$$
\end{thm}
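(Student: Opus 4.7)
The plan is to reduce the integral over $\G(F_v)$ to an integral over the positive Weyl chamber $\T(F_v)^+$ via the bi-$\K_v$-invariance of $H_v(\bfs,\cdot)$, and then analyze convergence one root at a time. Since $H_v(\bfs, k_v t_v k_v')$ depends only on the $t_v$-component of the Cartan decomposition, Macdonald's formula (for $v\nmid\infty$) and the Weyl integration formula (for $v\mid\infty$) both give
$$\mathcal{J}_v(\bfs) \;=\; \int_{\T(F_v)^+} \prod_{\alpha\in\mathcal A} |\alpha(t_v)|_v^{-s_\alpha}\, J_v(t_v)\,\mathrm{d}t_v,$$
where $J_v(t_v)$ is a non-negative density on $\T(F_v)^+$ dominated by the modular character $\delta(t_v)=\prod_{\alpha>0}|\alpha(t_v)|_v$. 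Since $2\rho=\sum_{\alpha\in\Delta}\kappa_\alpha\alpha$, one has $\delta(t_v)\le\prod_{\alpha\in\Delta}|\alpha(t_v)|_v^{\kappa_\alpha}$ on $\T(F_v)^+$.

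Next I would parametrize $\T(F_v)^+$ by fundamental coweights dual to $\Delta$, so that the integrand separates, up to lower-order factors coming from $J_v$ and from the change of variables, as $\prod_{\alpha\in\Delta}|\alpha(t_v)|_v^{\kappa_\alpha - s_\alpha}$, and the multiple integral becomes a product of one-dimensional integrals of type $\int_{\hat{F}_v}|x|_v^{\kappa_\alpha - s_\alpha}\,\mathrm{d}^\times x$. Each such factor converges precisely when $\Re(s_\alpha)>\kappa_\alpha$, i.e.\ on $\mathcal{T}_{-1}$, and can be bounded uniformly on $\mathcal{T}_{-1+\epsilon}$ by elementary geometric series (for $v\nmid\infty$) or gamma-function estimates (for $v\mid\infty$), yielding the desired constant $C_v(\epsilon)$.

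For the derivative statement, I would expand $\partial\in\mathcal U(\mathfrak g)$ in a Poincar\'e--Birkhoff--Witt basis adapted to an Iwasawa decomposition and apply each monomial to $H_v(\bfs,g_v)^{-1}$. Differentiations along torus directions produce factors of the form $P(\bfs)\prod_\alpha(\log|\alpha(t_v)|_v)^{m_\alpha}$ with $P$ polynomial in $\bfs$, while differentiations transverse to $\T(F_v)^+$ can be absorbed, via Harish-Chandra's radial-part formula, into bounded functions of $t_v$. Since each factor $(\log|\alpha(t_v)|_v)^{m_\alpha}$ is dominated by $|\alpha(t_v)|_v^{\eta}$ for any fixed $\eta>0$, the same rank-one convergence analysis gives holomorphy on $\mathcal{T}_{-1}$ and the uniform bound $C_v(\partial,\epsilon)$ on $\mathcal{T}_{-1+\epsilon}$.

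The main obstacle I expect is the archimedean derivative case: the radial part of an invariant differential operator is, by Harish-Chandra, a differential operator on $\T(F_v)^+$ with meromorphic coefficients having poles along the walls, and one must verify that these singularities are integrable against the measure coming from $J_v$. This is a local, rank-one check at each wall; it reduces to the same one-dimensional estimates as above, and the slight loss $\epsilon>0$ in the domain $\mathcal{T}_{-1+\epsilon}$ provides exactly the room needed to absorb the polynomial and logarithmic corrections.
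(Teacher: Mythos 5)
The paper gives no proof of Theorem~\ref{ramified} beyond a citation to [STT, Sections~6.6--6.7], so the relevant comparison is with the argument there; your plan follows exactly that route. Both reduce to the Cartan slice $\T(F_v)^+$ via bi-$\K_v$-invariance of the height, dominate the $\K_v$-double-coset volume (the Macdonald polynomial at finite places, the $KAK$ Jacobian at infinite places) by the modular character $\delta_\B(t)=\prod_{\alpha\in\Delta}|\alpha(t)|_v^{\kappa_\alpha}$ using $2\rho=\sum_\alpha\kappa_\alpha\alpha$, and factor the result into rank-one geometric or gamma integrals which converge precisely when $\Re(s_\alpha)>\kappa_\alpha$ for every $\alpha$, i.e.\ on $\cT_{-1}$, with uniform bounds on $\cT_{-1+\eps}$. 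The one imprecision is in your archimedean derivative analysis: since $H_v(\bfs,\cdot)$ is a function on $\K_v\backslash\G(F_v)/\K_v$, the radial-part theory you need is the one adapted to the Cartan decomposition, not the Iwasawa one, and the radial-part coefficients of a left-invariant operator are \emph{not} bounded functions of $t_v$ --- they have poles along the walls of $\T(F_v)^+$. As you correctly flag, this is harmless because the $KAK$ Jacobian $J_v$ vanishes to sufficient order at the walls and the product remains locally integrable, after which the exponential decay of $H_v(\bfs,\cdot)^{-1}$ on $\cT_{-1+\eps}$ absorbs the residual polynomial and logarithmic growth, taking an arbitrarily small exponent $\eta>0$.
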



The following result generalizes the computations in 
\cite[Sections 6 and 7]{STT} to the $(D,S)$-integral context.

\begin{thm}\label{thm:one-dim}
Let $\chi= \otimes'_v\chi_v$ be a one-dimensional unramified
automorphic representation of $\G(\A)$, 
and $S$ and $\cD$ as in Section~\ref{sect:intro}. 
For each $v\in S$ here exists a function $f_v$, 
holomorphic and uniformly bounded in $\cT_{-1-\delta}$, for some $\delta >0$, 
such that
$$
\int_{\G(F_v)} H_v(\mathbf s, g_v)^{-1} \chi_v \,\mathrm dg_v = \prod_{\alpha\in\cA}
L_v(s_\alpha - \kappa_\alpha,\chi_v \circ \check\alpha) \cdot f_v({\bf s}),
$$
where $L_v$ is the local factor of the Hecke $L$-function. 

Moreover, there exists a function $f_{S,\chi}$, which depends only on 
$(s_{\alpha})_{\alpha \notin \mathcal A_D}$,  is 
holomorphic in $\mathcal T^D_{-1/2}$, uniformly bounded in 
$\mathcal T^D_{-1/2+\epsilon}$, for any $\epsilon >0$, 
and such that
\begin{equation*}
\mathcal{J}_{S, \cD}({\bf s}, \chi) := \!
\int_{\G(\A_S)}  \bdelta_{S}(g)  H_S({\bf s}, g)^{-1}\chi(g) \, \mathrm dg = 
\prod_{\alpha \not\in \mathcal A_\cD}\! L(s_\alpha - \kappa_\alpha,\chi \circ \check\alpha )
\cdot f_{S,\chi}({\bf s}).  
\end{equation*}
\end{thm}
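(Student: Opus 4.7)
The plan is to execute both parts through the same machinery: the Cartan decomposition $\G(F_v) = \K_v \T(F_v)^+ \K_v$ at each place, combined with a Macdonald/Harish-Chandra type expansion of the resulting toric integral. Because $\chi$ is an unramified one-dimensional automorphic representation, each $\chi_v$ is trivial on $\K_v$ and depends only on the $\T$-component in the Cartan decomposition, so the local integrand $H_v(\bfs,g_v)^{-1}\chi_v(g_v)$ factors through the root coordinates $\{\alpha(t_v)\}_{\alpha \in \Delta}$.

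For part (1), proceed as in Sections 6.6 and 6.7 of \cite{STT}, inserting the extra factor $\chi_v$ into the computations. At a non-archimedean $v \in S$, the sum over $\T(F_v)^+/\T(\mathfrak o_v)$ is evaluated by Macdonald's spherical function formula (equivalently, a Langlands/Gindikin-Karpelevich identity), producing a rational expression whose denominator factors as $\prod_\alpha (1 - q_v^{-(s_\alpha - \kappa_\alpha)} \chi_v(\check\alpha(\varpi_v)))$, i.e., the inverse of the local Hecke $L$-factors $L_v(s_\alpha - \kappa_\alpha, \chi_v \circ \check\alpha)$. Dividing out these factors leaves an explicit rational function $f_v(\bfs)$ in the variables $q_v^{-s_\alpha}$ with coefficients controlled by the Weyl group order. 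At archimedean $v$ the same outline applies with Harish-Chandra's $c$-function replacing Macdonald's formula and Gamma factors replacing the Hecke factors; holomorphy and uniform boundedness on $\cT_{-1-\delta}$ then follow from the corresponding statement of Theorem~\ref{ramified} for the trivial character, together with the uniform bound $|\chi_v(\check\alpha(\varpi_v))| \le 1$.

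For part (2), I take the Euler product of the local identities over $v \notin S$, now incorporating the restriction imposed by $\bdelta_v$. Identifying $\T(F_v)^+/\T(\mathfrak o_v)$ with the positive cocharacter cone, the condition $\alpha(t_v) = 1$ for every $\alpha \in \mathcal A_\cD$ freezes the corresponding cocharacter coordinates at zero. The Macdonald sum thus collapses to a sub-sum over the complementary sub-lattice, killing precisely the factors $L_v(s_\alpha - \kappa_\alpha, \chi_v \circ \check\alpha)^{-1}$ indexed by $\alpha \in \mathcal A_\cD$ from the denominator and eliminating the dependence of $f_v$ on the variables $s_\alpha$ for $\alpha \in \mathcal A_\cD$. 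What remains is the advertised product of partial Hecke $L$-functions over $\alpha \notin \mathcal A_\cD$, together with an Euler product $f_{S,\chi} = \prod_{v \notin S} f_v$ depending only on $(s_\alpha)_{\alpha \notin \mathcal A_\cD}$.

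The main obstacle is to establish that $f_{S,\chi}$ extends holomorphically to $\cT^\cD_{-1/2}$, a region strictly larger than the domain of absolute convergence of the extracted $L$-factors. After extracting these factors, the expansion of $f_v$ in $\{q_v^{-s_\alpha}\}$ begins at quadratic order in each direction $\alpha \notin \mathcal A_\cD$, since the linear contributions have been absorbed into the $L$-factors; no unabsorbed linear terms arise from the $\alpha \in \mathcal A_\cD$ directions either, because those have been suppressed by $\bdelta_v$. This quadratic starting order yields $f_v = 1 + O(q_v^{-1-\epsilon})$ on $\cT^\cD_{-1/2+\epsilon}$ uniformly in $v$, so the Euler product converges absolutely by comparison with $\sum_v q_v^{-1-\epsilon}$, giving the asserted holomorphy and uniform bound.
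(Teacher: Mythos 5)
Your overall framework (Cartan decomposition, reduction to a lattice sum over $\T(F_v)^+/\T(\mathfrak o_v)$, freezing the $\cA_\cD$-coordinates via $\bdelta_v$, Euler product over $v \notin S$) matches the paper's route. The paper, however, does not invoke an exact Macdonald/Harish-Chandra formula; it uses the volume \emph{upper bound} $\vol(\K_v t_v({\bf a})\K_v) \le \delta_\B(t_v({\bf a}))\bigl(1 + C/q_v\bigr)$ from \cite[Lemma 6.11]{STT}, splits off the geometric-series main term (which produces the $L$-factors), and estimates the residual $b_v$ directly. Your proposal to use the exact spherical volume formula is a workable alternative, but as written it hides, rather than supplies, the ingredient that actually makes the Euler product converge.

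The genuine gap is your convergence argument for $f_{S,\chi}$. You assert that ``the expansion of $f_v$ in $\{q_v^{-s_\alpha}\}$ begins at quadratic order in each direction $\alpha \notin \cA_\cD$, since the linear contributions have been absorbed into the $L$-factors.'' This is false. Writing $\vol(\K_v t_v({\bf a})\K_v) = \delta_\B(t_v({\bf a}))\,W_v({\bf a})$ with $W_v(\underline{0}) = 1$, one has
\[
f_v({\bf s}) \;=\; \prod_{\alpha\notin\cA_\cD}\bigl(1 - \chi_v(\check\alpha(\varpi_v))\,q_v^{-(s_\alpha-\kappa_\alpha)}\bigr)\cdot \sum_{{\bf a}}\; q_v^{-\langle {\bf s}-2\rho,{\bf a}\rangle}\,\chi_v(t_v({\bf a}))\,W_v({\bf a}),
\]
and the coefficient of $q_v^{-(s_\alpha-\kappa_\alpha)}$ in $f_v$ is $\chi_v(\check\alpha(\varpi_v))\bigl(W_v(e_\alpha) - 1\bigr)$, which is \emph{not} zero. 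The expansion of $f_v - 1$ does contain terms linear in $q_v^{-(s_\alpha-\kappa_\alpha)}$; what saves the day is that these carry an extra factor $W_v(e_\alpha) - 1 = O(q_v^{-1})$ coming from the volume correction. It is this $q_v^{-1}$, not any vanishing of linear terms, that produces $b_v = O(q_v^{-3/2})$ on $\cT^\cD_{-1/2+\eps}$ and hence the absolute convergence of $\prod_{v\notin S} f_v$. The paper obtains the $O(q_v^{-1})$ input from \cite[Lemma 6.11]{STT}; if you insist on routing through the exact Macdonald formula, you must extract this $q_v^{-1}$ explicitly from that formula rather than appeal to a nonexistent quadratic starting order. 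Without it, the claimed bound $f_v = 1 + O(q_v^{-1-\eps})$ is unjustified, and the argument breaks precisely at the step where the convergence region is pushed past $\Re(s_\alpha) > \kappa_\alpha + 1$ down to $\Re(s_\alpha) > \kappa_\alpha + 1/2$.

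A secondary, minor point: for part (1) the paper simply cites \cite[Proposition 4.1.2]{C-T-integral} and \cite[Theorem 6.9]{STT} rather than recomputing. Your reduction to Theorem~\ref{ramified} together with $|\chi_v|\le 1$ is in the right spirit but is not literally the paper's source, and at archimedean places the passage from the trivial-character bound to the twisted one with the asserted Gamma-factor structure would itself need justification (Theorem~\ref{ramified} only bounds $\mathcal J_{v,\partial}$, it does not exhibit a factorization). This is less serious than the convergence gap but should be filled in if you pursue your route.
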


\begin{proof}
The first claim follows from  \cite[Proposition 4.1.2]{C-T-integral}, see also
\cite[Theorem 6.9]{STT}.

To prove the second claim, let
\begin{equation}
\mathcal{J}_{S, \cD}({\bf s}, \chi) = \prod_{v \notin S}
\mathcal{J}_v({\bf s}, \chi_v),
\end{equation}
where
\begin{equation}
\mathcal{J}_v({\bf s}, \chi_v) = \int_{\G(F_v)}\bdelta_v
(g_v) H_v({\bf s}, g_v)^{-1} \chi_v(g_v) \, \mathrm dg_v.
\end{equation}
Since $\G$ is of adjoint type, the collection of elements $\{
\check\alpha(\varpi_v)\}_{\alpha \in \Delta}$ 
forms a basis for the semi-group $\T_v(F_v)^+$.
For ${\bf a} = (a_\alpha)_{\alpha \in \De}$ we set
\begin{equation}
t_v({\bf a}) = \prod_{\alpha \in \mathcal A}\check\alpha(\varpi_v)^{a_\alpha} \in  \T (F_v)^+.
\end{equation}
Using the bi-$\K$-invariance we may write the local integrals as
\begin{equation}
\label{eqn:rewrite}
\mathcal{J}_v({\bf s}, \chi_v) =
\sum_{{\bf a} \in \N^r} \bdelta_v(t_v({\bf a})) 
q_v ^{-\langle{\bf s},  {\bf a}\rangle}  \chi_v(t_v({\bf
a}))\text{vol }(\sK_v t_v({\bf a})\sK_v),
\end{equation}
where 
$$
\langle {\bf s}, {\bf a}\rangle = \sum_{\alpha \in \mathcal A}  s_{\alpha}a_{\alpha}.
$$
By \cite[Lemma 6.11]{STT}, there exists a constant $C$, independent of $v$, 
such that for all
$t_v({\bf a}) \in \T (F_v)^+$, one has
\begin{equation}
\label{eqn:volume}
\vol(\sK_v t_v({\bf a})  \sK_v) \leq \delta_\B(t_v({\bf a}))\left(1 + \frac{C}{q_v}\right),
\end{equation}
where 
$$
\delta_{\B}(t_v({\bf a})):= |\rho(t_v({\bf a}))|_v^2=q_v^{\langle 2\rho, {\bf a} \rangle}.
$$
We may rewrite Equation~\eqref{eqn:rewrite} as
\begin{equation*}
\mathcal{J}_v({\bf s}, \chi_v) =  
\sum_{{\bf a} \in \N^r} \bdelta_v(t_v({\bf a}))
q_v ^{- \langle {\bf s} -2\rho, {\bf a} \rangle} \chi_v(t_v({\bf a}))   + b_v({\bf s}),
\end{equation*}
where
\begin{equation*}
b_v ({\bf s})= \sum_{{\bf a}\in \N^r} \bdelta_v(t_v({\bf a})) 
q_v ^{-\langle {\bf s}, {\bf a}
\rangle}  \chi_v(t_v({\bf
a}))(\text{vol }(\sK_vt_v({\bf a})\sK_v)- \delta_\B(t_v({\bf a}))).
\end{equation*}
Observe that
\begin{equation}\begin{split}\label{delta}
\sum_{\bf a\in \N^r} \!\bdelta_v(t_v({\bf a})) q_v^{-\langle {\bf s -2\rho}, {\bf a}  \rangle} 
\chi_v(t_v({\bf a})) 
& = \prod_{\alpha \not\in \mathcal A_\cD} \biggl( \sum_{a_\alpha
=0}^\infty
\chi_v(\check\alpha(\varpi_v))^{a_\alpha}q_v ^{-(s_\alpha - \kappa_{\alpha}) a_\alpha} \biggr) \\
& = \prod_{\alpha\not\in \mathcal A_\cD} {1 \over 1-
\chi_v(\check\alpha(\varpi_v))q_v^{-(s_\alpha -
\kappa_{\alpha})}}.
\end{split}\end{equation}
The corresponding Euler product, over $v\notin S$, is a product of partial 
$L$-functions, as in the statement of the theorem. 

Let $\mathsf{\sigma}= (\Re(s_\al))_\al$.
In the definition $b_v$, we may assume ${\bf a} \ne
\underline{0}$. Since for each $v \notin S$,
$$
\Biggl\{ {\bf a}\,\, | \,\, {\bf a} \ne \underline{0} \Biggr\} =
\bigcup_{\alpha \in \mathcal A} \Biggl\{ {\bf
a}; a_\alpha \ne 0 \Biggr\},
$$
we have
\begin{align*}
\sum_{v \notin S} \Bigl| b_v({\bf s}) \Bigr| & \leq \sum_{v \notin S}
\sum_\alpha \sum_{a_\alpha \ne 0}\bdelta_v(t_v({\bf a})) 
q_v ^{-\langle {\bf \sigma},  {\bf a}
 \rangle} \Biggl|
(\text{vol }(\sK_vt_v({\bf a})\sK_v)-  \delta_{\B}(t_v({\bf a})))\Biggr|  \\
& \ll\sum_{ v \notin S} q_v^{-1} \sum_{\alpha\not\in \mathcal A_\cD}
\sum_{a_\alpha \ne 0} q_v ^{-\langle  {\bf
\sigma},  {\bf a}  \rangle}
\delta_{\B}(t_v({\bf a})) \\
& =\sum_{v \notin S} q_v^{-1} \sum_{\alpha\not\in \mathcal A_\cD} \biggl(
\sum_{a_\alpha =1}^\infty q_v^{-(\sigma_\alpha -
\kappa_{\alpha})a_\alpha} \biggr) \prod_{ \beta \ne
\alpha \atop \beta \not\in \mathcal A_\cD}
\biggl( \sum_{a_\beta=0}^\infty q_v^{-(\sigma_\beta - \kappa_{\beta})a_\beta} \biggr) \\
& =\sum_{v \notin S} q_v^{-1} \sum_{\alpha\not\in\mathcal A_\cD} {
q_v^{-(\sigma_\alpha - \kappa_{\alpha})} \over
\prod_{\beta\not\in\mathcal A_\cD}
(1 -q_v^{-(\sigma_\beta - \kappa_{\beta})} ) } \\
& \ll \sum_{\alpha\not\in\mathcal A_\cD} \sum_{v \notin S} q_v^{ - {3
\over 2}} < \infty.
\end{align*}

\

Note that for ${\bf s} \in \cT_{-1/2+\epsilon}^\cD$ the estimates are
uniform and the corresponding function $f_{S,\chi}$ is
holomorphic in $\cT^D_{-1/2+\epsilon}$. 
\end{proof}

\section{Infinite-dimensional representations} 
\label{sect:regularintegral}

Let $\pi = \otimes_v' \pi_v$ be an
infinite-dimensional automorphic representation of $\G(\A)$ and
$\varphi_{\pi_v}$ the normalized spherical function associated
to $\pi_v$. Since $\G$ is split, the representation $\pi_v$
is infinite-dimensional, for all $v$ (see  \cite[Proposition 4.4]{STT}). 
We need uniform upper bounds for $\varphi_{\pi_v}$. 
We will use the following special case of a result of Hee Oh 
\cite[Theorem 1.1]{oh}.

\begin{prop}
\label{prop:n} 
Assume that the rank $r$ of $\G$ is at least two. 
Then for each $\alpha \in \De$ and $g_v=k_v t_vk_v'$ we have 
\begin{equation}
\label{eqn:oh}
|\varphi_v(g_v)| \leq |\alpha(t_v)|^{-1/2}
\left( \frac{ (\log_{q_v} \vert \alpha(t_v) \vert ) (q_v-1) +
(q_v+1) } { q_v+1} \right). 
\end{equation}
\end{prop}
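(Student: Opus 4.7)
The plan is to deduce the bound directly from Hee Oh's uniform estimate on matrix coefficients \cite[Theorem 1.1]{oh}. That theorem applies to irreducible unitary representations of semisimple $p$-adic (and archimedean) groups of split rank at least two, subject to a mild non-degeneracy hypothesis, and produces pointwise upper bounds on normalized spherical functions that are exactly of the shape appearing in the right-hand side of \eqref{eqn:oh}.

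The first step is to verify that Oh's hypotheses are met by each local component $\pi_v$. Since $\pi$ is infinite-dimensional and $\G$ is split semisimple of adjoint type, \cite[Proposition 4.4]{STT} guarantees that $\pi_v$ is itself infinite-dimensional at every place $v$; in particular the spherical vector of $\pi_v$ cannot be fixed by any of the rank-one subgroups generated by a coroot $\check\alpha$, which is precisely the non-triviality condition required by Oh. The standing assumption $r \geq 2$ then places us inside the range of validity of the theorem.

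The second step is to specialize Oh's general bound, which is phrased in terms of strongly orthogonal systems of roots, to the contribution of a single simple root $\alpha \in \Delta$. For a split group of rank at least two, any $\alpha$ can be extended to a strongly orthogonal system, and extracting the contribution of $\alpha$ alone yields the factor $|\alpha(t_v)|^{-1/2}$ together with the explicit polynomial correction in $\log_{q_v} |\alpha(t_v)|$. The precise form of that correction, namely $((\log_{q_v}|\alpha(t_v)|)(q_v-1)+(q_v+1))/(q_v+1)$, is what Oh obtains from the Macdonald-type formula for the spherical function on the rank-one subgroup associated with $\alpha$.

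There is no substantive obstacle here, since the proposition is essentially a specialization of a published theorem. The only delicate point is the verification of the non-triviality hypothesis across all places simultaneously, and this follows at once from the global assumption that $\pi$ is infinite-dimensional together with the fact that $\G$ is of adjoint type, which rules out the finite-dimensional components that would otherwise violate Oh's bound.
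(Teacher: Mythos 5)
Your proposal is correct and takes essentially the same approach as the paper, which simply records the estimate as a special case of Hee Oh's Theorem 1.1 and does not write out a proof. Your additional commentary on verifying Oh's hypotheses (rank $\geq 2$ and non-triviality of each $\pi_v$, which follows from infinite-dimensionality via \cite[Proposition 4.4]{STT}) and on the rank-one Macdonald/Harish-Chandra origin of the explicit factor matches what is implicit in the paper's citation.
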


\begin{coro}
\label{coro:n} 
Assume that the rank $r$ of $\G$ is at least two. 
Then for each $\alpha \in \De$ we have
\begin{equation*}
\vert \varphi_{\pi_v} (\check\alpha(\varpi_v)) \vert < 2 q_v^{-{1\over 2}}.
\end{equation*}
Moreover, for all $\epsilon >0$ there is a constant
$C_\epsilon >0$ such that for $g_v = k_vt_v k'_v$
we have
$$
|\varphi_{\pi_v}(g_v)| \leq C_\epsilon 
\prod_{\alpha \in \Delta}|\alpha(t_v)|^{-\frac{1}{2r}+\epsilon}.
$$
\end{coro}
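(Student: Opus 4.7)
The plan is to deduce both assertions directly from Proposition \ref{prop:n}.

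For the first assertion, I would apply \eqref{eqn:oh} to $t_v = \check\alpha(\varpi_v)$, using the same simple root $\alpha$ on both sides. Since $\langle \alpha, \check\alpha\rangle = 2$, in the normalization under which the elements $\check\alpha(\varpi_v)$ generate the semigroup $\T(F_v)^+$, we have $|\alpha(\check\alpha(\varpi_v))| = q_v^2$ and $\log_{q_v}|\alpha(\check\alpha(\varpi_v))| = 2$. Substituting yields
\[
|\varphi_{\pi_v}(\check\alpha(\varpi_v))| \leq q_v^{-1}\cdot\frac{2(q_v-1)+(q_v+1)}{q_v+1} = \frac{3q_v-1}{q_v(q_v+1)},
\]
and the desired bound reduces to the elementary inequality $3q_v - 1 < 2(q_v+1)q_v^{1/2}$, which holds for every prime power $q_v \geq 2$ (the right side grows like $q_v^{3/2}$, the left only linearly; the base case $q_v=2$ is immediate).

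For the second assertion, the key observation is that \eqref{eqn:oh} holds \emph{simultaneously} for every $\alpha \in \Delta$. Multiplying the resulting $r$ inequalities and extracting the $r$-th root gives
\[
|\varphi_{\pi_v}(g_v)| \leq \prod_{\alpha \in \Delta}|\alpha(t_v)|^{-1/(2r)}\cdot \prod_{\alpha\in\Delta}\left(\frac{(\log_{q_v}|\alpha(t_v)|)(q_v-1)+(q_v+1)}{q_v+1}\right)^{1/r}.
\]
The first product is already of the desired shape; what remains is to absorb the second product into a power $\prod_\alpha|\alpha(t_v)|^\epsilon$ with a constant independent of $v$.

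The main technical point is uniformity of this constant in $v$. Writing $n_\alpha := \log_{q_v}|\alpha(t_v)| \in \N$, one has the pointwise estimate
\[
\frac{n_\alpha(q_v-1)+(q_v+1)}{q_v+1} \leq n_\alpha + 1,
\]
together with the elementary comparison $n+1 \leq C_\epsilon\cdot 2^{n\epsilon} \leq C_\epsilon\cdot q_v^{n\epsilon}$, valid for all prime powers $q_v \geq 2$ and all $n \in \N$ with a constant $C_\epsilon$ depending only on $\epsilon$. Thus each bracketed factor is bounded by $C_\epsilon |\alpha(t_v)|^{\epsilon}$, and substituting into the displayed product (and then relabeling $\epsilon \mapsto r\epsilon$) yields the stated bound.
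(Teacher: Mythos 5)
Your argument is correct and, for the second assertion, mirrors the paper's own proof: multiply the bound \eqref{eqn:oh} over all $\alpha\in\Delta$, extract the $r$-th root, and absorb each logarithmic factor into a power $|\alpha(t_v)|^{\epsilon}$. You make explicit the uniformity in $v$ (via $n+1\leq C_\epsilon q_v^{n\epsilon}$, valid because $q_v\geq 2$), a point the paper leaves implicit. The paper's proof does not actually spell out the first assertion; your derivation of it is sound, but note a normalization subtlety: you take $\log_{q_v}|\alpha(\check\alpha(\varpi_v))|=2$ from the literal pairing $\langle\alpha,\check\alpha\rangle=2$, whereas the height computation in the proof of Theorem \ref{thm:one-dim} tacitly uses $|\alpha(t_v({\bf a}))|_v=q_v^{a_\alpha}$, under which the relevant exponent is $1$ and the estimate becomes $q_v^{-1/2}\cdot\frac{2q_v}{q_v+1}<2q_v^{-1/2}$, which accounts for the constant $2$ directly. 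Your reading produces the looser but still sufficient bound $\frac{3q_v-1}{q_v(q_v+1)}<2q_v^{-1/2}$, so the conclusion holds under either convention.
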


\begin{proof}
Equation~\eqref{eqn:oh} implies that for all $\epsilon >0$, 
$$
|\varphi_{\pi_v}(g_v)|\leq C_\epsilon(\alpha) |\alpha(t_v)|^{-\frac{1}{2}+r\epsilon}, 
$$
for some constant $C_\epsilon(\alpha)$.
Multiplying these inequalities over all $\alpha\in \De$ and taking $r$th
root gives the result.
\end{proof}

\

Set 
\begin{equation}
\mathcal{J}_v({\bf s}, \pi_v) =
\int_{\G(F_v)}\bdelta_v(g_v)  H_v({\bf s}, g_v)^{-1}\varphi_{\pi_v}(g_v)\,\mathrm  dg_v.
\end{equation}
Note that for $v\notin S$  this integral 
depends only on $(s_{\alpha})_{\alpha \notin \cA_D}$.

\begin{thm}
\label{non-triv} 
The infinite product
\begin{equation}\label{non-triv-eq}
\mathcal{J}_{S, \cD}({\bf s}, \pi) : = \prod_{v \notin S}
\mathcal{J}_v({\bf s}, \pi_v)
\end{equation}
is holomorphic for ${\bf s}\in \cT^\cD_{-1/2r}$. Moreover, 
for all $\eps>0$ and all compacts $K\subset\cT^D_{-1/2r+\eps}$ 
there exists a constant $C(\eps,K)$, independent of $\pi$, such that
\begin{equation*}
\vert \mathcal{J}_{S, \cD}({\bf s}, \pi) \vert \leq C(\eps,K).
\end{equation*}
for all ${\bf s} \in K$.
\end{thm}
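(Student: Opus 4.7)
The plan is to establish, for every compact $K\subset\cT^\cD_{-1/(2r)+\eps}$, a uniform Euler-factor bound
\begin{equation*}
|\mathcal{J}_v(\bfs,\pi_v)-1|\ \ll_K\ q_v^{-1-\delta},\qquad \bfs\in K,
\end{equation*}
for some $\delta=\delta(K)>0$ independent of $\pi$. Since $\sum_{v\notin S}q_v^{-1-\delta}<\infty$, this implies absolute and uniform convergence of the infinite product \eqref{non-triv-eq} on $K$, and therefore both holomorphicity on $\cT^\cD_{-1/(2r)}$ (by exhausting with such compacts as $\eps\to 0$) and the claimed uniform bound.

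To derive this Euler-factor estimate, I would first unfold $\mathcal{J}_v(\bfs,\pi_v)$ using the Cartan decomposition $\G(F_v)=\K_v\T(F_v)^+\K_v$. Since $H_v$, $\bdelta_v$, and the normalized spherical function $\varphi_{\pi_v}$ are all bi-$\K_v$-invariant, arguing exactly as in \eqref{eqn:rewrite} gives
\begin{equation*}
\mathcal{J}_v(\bfs,\pi_v)=\sum_{\bfa\in\N^r}\bdelta_v(t_v(\bfa))\,q_v^{-\langle\bfs,\bfa\rangle}\,\varphi_{\pi_v}(t_v(\bfa))\,\vol(\K_v t_v(\bfa)\K_v),
\end{equation*}
and the $\bfa=\underline 0$ term contributes $\vol(\K_v)=1$. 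For $\bfa\ne\underline 0$, I will apply three ingredients: the volume estimate $\vol(\K_v t_v(\bfa)\K_v)\le\delta_\B(t_v(\bfa))(1+C/q_v)$ from \cite[Lemma 6.11]{STT}, recalled in \eqref{eqn:volume}; the Hee Oh bound $|\varphi_{\pi_v}(t_v(\bfa))|\le C_\eta\,q_v^{(-1/(2r)+\eta)|\bfa|}$ from Corollary~\ref{coro:n}, with $|\bfa|:=\sum_\alpha a_\alpha$; and the constraint $a_\alpha=0$ for all $\alpha\in\mathcal A_\cD$ coming from $\bdelta_v(t_v(\bfa))=1$. Setting $x_\alpha:=\Re(s_\alpha)-\kappa_\alpha+1/(2r)-\eta$, these combine to give
\begin{equation*}
|\mathcal{J}_v(\bfs,\pi_v)-1|\le C_\eta(1+C/q_v)\!\!\!\sum_{\substack{\bfa\ne\underline 0\\ a_\alpha=0,\ \alpha\in\mathcal A_\cD}}\!\prod_{\beta\notin\mathcal A_\cD}q_v^{-x_\beta a_\beta}.
\end{equation*}

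Since $K$ is compact and strictly inside the open region $\cT^\cD_{-1/(2r)+\eps}$, there is $\eps'>0$, depending only on $K$, with $\Re(s_\alpha)\ge\kappa_\alpha+1-1/(2r)+\eps+\eps'$ for all $\bfs\in K$ and $\alpha\notin\mathcal A_\cD$. Choosing $\eta<\eps+\eps'$ yields $x_\alpha>1+\delta$ uniformly, with $\delta:=\eps+\eps'-\eta>0$. Decomposing $\{\bfa\ne\underline 0,\ a_\alpha=0\ \forall\alpha\in\mathcal A_\cD\}=\bigcup_{\beta\notin\mathcal A_\cD}\{a_\beta\ne 0\}$ and summing the resulting geometric series (exactly as in the bound for $b_v(\bfs)$ in the proof of Theorem~\ref{thm:one-dim}) shows that the inner sum is at most a $K$-dependent constant times $q_v^{-(1+\delta)}$, which is the required bound.

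The key obstacle, and the essential content of the theorem, is the extra factor $q_v^{-1/(2r)+\eta}$ per coordinate supplied by Corollary~\ref{coro:n}; without it the leading $|\bfa|=1$ terms would contribute only $q_v^{-1}$, just failing to be summable at the edge of the rational-point region. It is this gain from the Hee Oh bound, which is exactly where the rank hypothesis $r\ge 2$ enters, that pushes the region of absolute convergence strictly past $\cT^\cD_0$ and makes the infinite-dimensional spectrum holomorphic on $\cT^\cD_{-1/(2r)}$ while the one-dimensional contributions of Theorem~\ref{thm:one-dim} produce the main poles.
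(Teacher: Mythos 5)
Your proof is correct and follows the same route as the paper's: unfold $\mathcal{J}_v(\bfs,\pi_v)$ via the Cartan decomposition, apply the volume estimate \eqref{eqn:volume} and the Hee Oh bound from Corollary~\ref{coro:n}, use the constraint $a_\alpha=0$ on $\mathcal A_\cD$ imposed by $\bdelta_v$, and reduce convergence of the Euler product to the summability of $\sum_{v\notin S}\sum_{\bfa\ne\underline 0}q_v^{-\sum_{\alpha\notin\cA_D}(\sigma_\alpha-\kappa_\alpha+1/(2r)-\epsilon)a_\alpha}$. You go slightly further than the paper in making the final geometric-series estimate $|\mathcal{J}_v(\bfs,\pi_v)-1|\ll_K q_v^{-1-\delta}$ explicit, which the paper leaves implicit, but the argument is the same.
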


\

\begin{proof}
Using bi-$\K$-invariance, as in the proof of Theorem~\ref{thm:one-dim}, 
we obtain
\begin{equation*}
\mathcal{J}_v({\bf s}, \pi_v) = \sum_{{\bf a}\in \N^r} 
\bdelta_v(t_v({\bf a})) q_v^{-\langle {\bf s}, {\bf a}\rangle }
\varphi_{\pi_v}(t_v({\bf a})) \text{vol}(\K_v t_v({\bf a}) \K_v).
\end{equation*}
Again, $\mathcal{J}_v({\bf s}, \pi_v)$ only depends on $s_{\alpha}$ with $\alpha \notin \cA_D$. 
Using the estimates from Corollary~\ref{coro:n} and Equation~\eqref{eqn:volume}
we conclude that to establish the convergence of the Euler product it suffices to bound
$$
\sum_{v\notin S} \sum_{(a_{\alpha})} 
q_v^{- (\sum_{\alpha \notin \cA_D} (s_\alpha-\kappa_{\alpha} +1/2r-\epsilon)a_{\alpha})},
$$
where the inner sum is over nonzero vectors $(a_{\alpha})\in \N^{\cA\setminus \cA_D}$ .
\end{proof}

Now we consider integrals of the form
\begin{equation*}
\mathcal{J}_v(\bfs, \varphi_{\pi_v}) := \int_{\G(F_v)}
H_v(\bfs, g_v)^{-1} \varphi_{\pi_v}(g_v)\, \mathrm dg_v, 
\end{equation*}
for $v\in S$. 

\begin{thm}
\label{ramified2} \begin{enumerate} \item For all $v\notin
S_{\infty}$ the integral $ \mathcal{J}_v(\bfs, \varphi_v)$ is 
holomorphic for $\bfs \in \cT_{-1-1/2r}$.
\item For $v\in S_{\infty}$ and $\partial$ in
the universal enveloping algebra the integral
\begin{equation*}
\mathcal{J}_{v,\partial}(\bfs, \varphi_{\pi_v}):= \int_{\G(F_v)}
\partial( H_v(\bfs, g_v)^{-1} ) \varphi_{\pi_v}(g)\, \mathrm d g_v
\end{equation*}
is holomorphic for $\bfs \in \cT_{-1-1/2r}$.
\end{enumerate}
\end{thm}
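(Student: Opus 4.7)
The plan is to reduce both statements to Theorem~\ref{ramified} by means of the uniform pointwise bound on $\varphi_{\pi_v}$ supplied by Corollary~\ref{coro:n}. Introducing the shift vector $\mathbf{c}_\epsilon := (1/(2r) - \epsilon)_{\alpha \in \cA}$, Corollary~\ref{coro:n} reads
\begin{equation*}
|\varphi_{\pi_v}(g_v)| \le C_\epsilon\, H_v(\mathbf{c}_\epsilon, g_v)^{-1}, \qquad g_v = k_v t_v k'_v,\ \ t_v \in \T(F_v)^+,
\end{equation*}
and multiplicativity of $H_v$ in $\bfs$ gives the basic pointwise domination
\begin{equation*}
\bigl| H_v(\bfs,g_v)^{-1}\varphi_{\pi_v}(g_v)\bigr| \le C_\epsilon\, H_v(\bfs+\mathbf{c}_\epsilon, g_v)^{-1}.
\end{equation*}
So up to a small rightward shift in $\bfs$, the height integral with spherical function is controlled by the plain height integral.

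For part (1), fix $v \notin S_\infty$. Using bi-$\K_v$-invariance, the Cartan decomposition, and the volume bound~\eqref{eqn:volume} exactly as in the proof of Theorem~\ref{thm:one-dim}, the integral dominates termwise:
\begin{equation*}
|\mathcal{J}_v(\bfs,\varphi_{\pi_v})| \le C_\epsilon(1 + C/q_v) \sum_{{\bf a} \in \N^r} q_v^{-\langle\bfs + \mathbf{c}_\epsilon - 2\rho,\,{\bf a}\rangle}.
\end{equation*}
This is a product of geometric series, absolutely convergent whenever $\Re(s_\alpha) > \kappa_\alpha - 1/(2r) + \epsilon$ for every $\alpha$. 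Since $\mathcal{J}_v(\bfs,\varphi_{\pi_v})$ is a Dirichlet series in the variables $q_v^{-s_\alpha}$, absolute convergence yields holomorphy; letting $\epsilon \downarrow 0$ we obtain holomorphy on $\cT_{-1-1/2r}$.

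For part (2), fix $v \in S_\infty$ and $\partial$ in the universal enveloping algebra. Applied to $H_v(\bfs,g_v)^{-1} = \prod_\alpha |\alpha(t_v)|_v^{-s_\alpha}$, the operator $\partial$ produces a finite sum of terms of the form $P(\log|\alpha(t_v)|_v;\bfs)\cdot H_v(\bfs,g_v)^{-1}$, with $P$ polynomial in the log factors and polynomial in $\bfs$. On $\T(F_v)^+$ one has $|\alpha(t_v)|_v \ge 1$, so the elementary inequality $\log x \le x^\eta/\eta$ for $x \ge 1$ lets us absorb each log factor into an arbitrarily small leftward shift of $\bfs$: for every $\eta > 0$ and every compact $K \subset \C^{\cA}$,
\begin{equation*}
\bigl|\partial\, H_v(\bfs,g_v)^{-1}\bigr| \le C(\partial,\eta,K)\, H_v(\bfs - \eta\mathbf{1}, g_v)^{-1}, \qquad \bfs \in K.
\end{equation*}
Combining with the bound on $|\varphi_{\pi_v}|$ dominates the integrand of $\mathcal{J}_{v,\partial}(\bfs,\varphi_{\pi_v})$ by a constant multiple of $H_v(\bfs + \mathbf{c}_\epsilon - \eta\mathbf{1}, g_v)^{-1}$, which is integrable by Theorem~\ref{ramified}(1) as soon as $\bfs + \mathbf{c}_\epsilon - \eta\mathbf{1} \in \cT_{-1}$. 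Letting $\epsilon,\eta \downarrow 0$ exhausts $\cT_{-1-1/2r}$. Holomorphy of the integral then follows by differentiation under the integral sign: $\bfs$-derivatives of the integrand produce further log factors controlled by the very same absorption, yielding a common dominating function in a neighborhood of each $\bfs \in \cT_{-1-1/2r}$.

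The main technical nuisance, and essentially the only one, is the uniform log-absorption in part (2): one must check that $C(\partial,\eta,K)$ can be chosen uniformly in $\bfs \in K$ and that the same kind of domination survives taking $\bfs$-derivatives in order to justify differentiating under the integral sign. Both points are routine, as differentiating $\prod_\alpha |\alpha(t_v)|_v^{-s_\alpha}$ (either by $\partial$ in $g_v$ or by $\partial/\partial s_\alpha$) only spawns further log-monomials in $|\alpha(t_v)|_v$, absorbed identically.
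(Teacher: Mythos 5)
Your argument is essentially the same as the paper's: both use Corollary~\ref{coro:n} to absorb the spherical-function bound into a shift of $\bfs$ by $(1/(2r)-\epsilon)\mathbf{1}$, and both then appeal to the Cartan decomposition plus the volume estimate \eqref{eqn:volume} to reduce to a convergent product of geometric series (the paper's proof is precisely your part (1) computation, after which it says the archimedean case is similar). One caveat on part (2): the claim that $\partial H_v(\bfs,\cdot)^{-1}$ is literally a polynomial in $\log|\alpha(t_v)|$ times $H_v(\bfs,\cdot)^{-1}$ is an oversimplification since the Cartan coordinate $g_v\mapsto t_v$ is not smooth at singular elements; what one actually has, and what the cited estimates in \cite{STT} provide, is the weaker pointwise domination $|\partial H_v(\bfs,g_v)^{-1}|\le C(\partial,\eta,K)\,H_v(\Re\bfs-\eta\mathbf{1},g_v)^{-1}$, which is exactly what your argument needs and is all you end up using, so the conclusion stands.
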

\begin{proof} 
We verify the non-archimedean statement; the other argument is similar. 
Let
$\underline{\sigma}$ be the vector consisting of the real parts of
the components of $\bfs$. Fix $\epsilon >0$.  The local height
integral is majorized by
\begin{align*}
&  \sum_{t \in \sT(F_v)^+} H_v(\underline{\sigma}, t)^{-1} |\varphi_v(t)| \de_B(t) \\
& \ll \prod_{\alpha \in \cA}\sum_{l=0}^\infty H_v(\underline{\sigma}, \check\alpha(\varpi_v^l))^{-1}
q_v^{-(1/2r-\epsilon)l} \delta_B(\check\alpha (\varpi_v^l))\\
& =\prod_{\alpha \in \cA}\sum_{l=0}^\infty q_v^{-(\sigma_\alpha -
\kappa_\alpha+ 1/2r-\epsilon)l }.
\end{align*}
The result is now immediate. 
\end{proof}

\begin{coro}
In the non-archimedean situation, for each $\epsilon > 0$ there is
a constant $C_v(\eps)$, such that $|\mathcal{J}_v(\bfs,
\varphi_{\pi_v})| \leq C_v(\eps)$ for all $\bfs \in \cT_{-1-1/2r+\eps}$.
In the archimedean situation, for all $\eps>0$ and all $\partial$
as above, there is a constant $C_v(\partial, \eps)$ such that
$|\mathcal{J}_{v,\partial}(\bfs, \varphi_{\pi_v})| \leq C_v(\partial,
\eps)$ for all $\bfs \in \cT_{-1-1/2r + \eps}$.
\end{coro}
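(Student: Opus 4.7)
The plan is to refine the analysis in Theorem~\ref{ramified2}: the dominated convergence argument used there to establish holomorphy already produces explicit majorants that are uniformly bounded once one restricts to the slightly smaller closed tube $\cT_{-1-1/2r+\eps}$, which is precisely the content of the corollary. In both cases the constant will emerge as the supremum of a finite product of explicit geometric series or one-dimensional Mellin integrals that are continuous and bounded in $\bfs$ on the prescribed tube.

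For the non-archimedean case, I would reprise the computation from Theorem~\ref{ramified2}: combining bi-$\K_v$-invariance and the Cartan decomposition to pass to a sum over $\N^r$, applying the volume estimate \cite[Lemma 6.11]{STT}, and inserting the uniform spherical bound from Corollary~\ref{coro:n} with an auxiliary parameter $\eps' > 0$, the integral is dominated by
\begin{equation*}
C_{\eps'}\prod_{\alpha\in\cA}\sum_{l=0}^\infty q_v^{-(\sigma_\alpha - \kappa_\alpha + 1/2r - \eps')l}.
\end{equation*}
For $\bfs \in \cT_{-1-1/2r+\eps}$ we have $\sigma_\alpha - \kappa_\alpha + 1/2r > \eps$, so choosing $\eps' = \eps/2$ makes each geometric series bounded by $(1-q_v^{-\eps/2})^{-1}$, a quantity independent of $\bfs$; the finite product over $\alpha\in\cA$ yields the desired constant $C_v(\eps)$.

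The archimedean case follows the same template with two modifications. First, the sum over $\N^r$ is replaced by the Haar integral on $\T(F_v)^+$ with Jacobian $\delta_\B(t_v)$. Second, applying $\partial\in U(\mg)$ on the right to $H_v(\bfs,g_v)^{-1}$ produces a finite sum of terms of the shape $p(\bfs)\cdot \psi(g_v)\cdot H_v(\bfs,g_v)^{-1}$, where $p$ is a polynomial in the components of $\bfs$ of degree at most the order of $\partial$ and $\psi$ is a smooth function whose growth on the regular locus of $\T(F_v)^+$ is polynomial in $\{\log|\alpha(t_v)|\}_{\alpha\in\De}$. This polylogarithmic growth is absorbed into a slight decrease of the Oh exponent $-1/(2r)+\eps'$ from Corollary~\ref{coro:n}, reducing the estimate to a finite product of one-dimensional Mellin integrals on $\hat{F}_v$ of the form $\int_1^\infty x^{-(\sigma_\alpha - \kappa_\alpha + 1/2r - \eps'')}\, \mathrm d^\times x$, whose values are continuous and uniformly bounded for $\bfs \in \cT_{-1-1/2r+\eps}$.

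The main technical obstacle is the archimedean step: since $H_v(\bfs,g_v)^{-1}$ is defined piecewise via the Cartan decomposition and is not a smooth character of $\G(F_v)$, one must justify how $\partial$ acts on it and control the resulting smooth factors. I would handle this by passing to the open Bruhat cell and Iwasawa coordinates, where $H_v(\bfs,\cdot)^{-1}$ becomes genuinely smooth (the product of a character of the Borel with a bounded $\K_v$-finite correction factor), reducing the decomposition into the $p(\bfs)\cdot \psi(g_v)\cdot H_v(\bfs,g_v)^{-1}$ form to a standard computation in the universal enveloping algebra.
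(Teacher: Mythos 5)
Your proposal is correct and follows the same route the paper intends: the corollary is immediate from the proof of Theorem~\ref{ramified2}, since the majorant produced there (the finite product over $\alpha\in\cA$ of geometric series with ratio $q_v^{-(\sigma_\alpha-\kappa_\alpha+1/2r-\eps')}$) is not merely convergent but uniformly bounded once $\sigma_\alpha - \kappa_\alpha + 1/2r > \eps$, i.e.\ on $\cT_{-1-1/2r+\eps}$; your choice $\eps'=\eps/2$ makes this explicit. The only thing you spend more effort on than the paper does is the archimedean differentiability of $H_v(\bfs,\cdot)^{-1}$: this is not an obstacle the paper needs to negotiate here, because the smoothness of the bi-$\K_v$-invariant local height and the structure of $\partial(H_v(\bfs,\cdot)^{-1})$ were already established in \cite[Sections~6.6--6.7]{STT} and are invoked as known (Theorem~\ref{ramified} above handles exactly this $\partial$-integral without the spherical factor). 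So your Iwasawa-coordinate detour is a reasonable way to rederive that fact, but it is extra work; once one accepts the archimedean analogue of the $\N^r$-sum with polynomial-in-$\log|\alpha(t_v)|$ factors, the uniform bound drops out exactly as in your non-archimedean paragraph by absorbing the polylog growth into a shift of the Oh exponent.
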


\begin{coro}\label{coro-inf-int} 
Let $\varphi$ be an automorphic form in
the space of an automorphic representation $\pi$ which is right
invariant under the maximal compact subgroup $\K$. 
Set for $\bfs \in \cT_{\gg 0}$
\begin{equation}\label{inf-int}
\mathcal{J}_{S, \cD}(\bfs, \varphi) : = \int_{\G(\A)}\bdelta_{S,
\cD}(g) H(\bfs, g)^{-1}\varphi(g)dg.
\end{equation}
Then $\mathcal{J}_{S, \cD}(s, \varphi)$ has an analytic continuation
to a function which is holomorphic on $\cT_{-1/2}^\cD \cap
\cT_{-1-1/2r}$. Let $\bDelta$ be the Laplacian as in the proof
of \cite[Lemma 4.1]{Arthur}, and suppose $\varphi$ is an
eigenfunction for $\bDelta$. Define $\bLambda(\phi)$ by
$\bDelta\cdot\varphi = \bLambda(\varphi)\cdot\phi$. Then for each integer
$k>0$, all $\epsilon > 0$, and every compact subset $K \subset
\cT_{-1/2r+\eps}^\cD \cap \cT_{-1-1/2r+\epsilon}$, there exists a
constant $C= C(\eps, K, k)$, independent of $\varphi$, such that
\begin{equation}
|\mathcal{J}_{S, \cD}(\bfs, \varphi)| \leq C \bLambda(\varphi)^{-k}
|\varphi(e)|,
\end{equation}
for all $\bfs \in K$.
\end{coro}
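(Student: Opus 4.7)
The plan is to reduce the global integral to a product of local integrals via a spherical Fourier decomposition, apply the local analytic results of Theorems~\ref{non-triv} and \ref{ramified2}, and then use the self-adjointness of the Laplacian to extract the factor $\bLambda(\varphi)^{-k}$.

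The first step is to establish the factorization
$$
\mathcal{J}_{S, \cD}(\bfs, \varphi) \;=\; \varphi(e)\cdot
\mathcal{J}_{S, \cD}(\bfs, \pi)\cdot\prod_{v \in S} \mathcal{J}_v(\bfs, \varphi_{\pi_v}).
$$
Since the kernel $\bdelta_{S, \cD}(g)H(\bfs, g)^{-1}$ is bi-$\K$-invariant and factors over places, convolution against it preserves $\pi$ and commutes with the right $\K$-action. Because $\varphi$ spans the one-dimensional space of spherical vectors in the irreducible $\pi = \otimes_v' \pi_v$, Schur's lemma forces this operator to act on $\varphi$ by a scalar, and the product formula $\varphi_\pi = \prod_v \varphi_{\pi_v}$ for the normalized global spherical function identifies that scalar as the displayed product of local spherical Fourier transforms; evaluating the convolution at the identity produces the factor $\varphi(e)$. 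Granting this, holomorphy on $\cT^\cD_{-1/2}\cap\cT_{-1-1/2r}$ and the uniform upper bound (independent of $\pi$, with $|\varphi(e)|$ factored out) follow immediately from Theorem~\ref{non-triv} for the Euler product over $v\notin S$ and Theorem~\ref{ramified2}(1) for the finitely many factors at $v\in S$.

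To gain the extra decay $\bLambda(\varphi)^{-k}$, I would invoke the self-adjointness of $\bDelta$ on $\G(F)\backslash\G(\A)$ together with the eigenfunction equation $\bDelta\varphi = \bLambda(\varphi)\varphi$. Iterated integration by parts yields
$$
\bLambda(\varphi)^k\cdot \mathcal{J}_{S, \cD}(\bfs, \varphi) \;=\;
\int_{\G(\A)} \bdelta_{S, \cD}(g)\,\bDelta^k\!\bigl(H(\bfs, g)^{-1}\bigr)\,\varphi(g)\,dg.
$$
Since $\bDelta^k$ acts only at the archimedean places (where $\bdelta_v\equiv 1$), the new kernel is still bi-$\K$-invariant and factorizable, so the same factorization applies with the archimedean local factors replaced by $\mathcal{J}_{v,\bDelta^k}(\bfs, \varphi_{\pi_v})$. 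Theorem~\ref{ramified2}(2) with $\partial = \bDelta^k$ controls these new local factors uniformly, and together with the bounds already used this gives the claimed inequality.

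The main obstacle is the rigorous justification of the factorization step: verifying that the convolution operator associated to $\bdelta_{S, \cD}H(\bfs, \cdot)^{-1}$ really does act on the spherical line of $\pi$ by exactly the product of local spherical Fourier transforms. This requires absolute convergence in the range of $\bfs$ under consideration and the standard compatibility between the global representation $\pi$ and its local components $\pi_v$; once these technicalities are handled, the rest of the argument is a routine assembly of the local estimates proved above.
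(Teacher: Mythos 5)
Your approach is the standard (and essentially forced) one, and it matches what the paper implicitly intends: factor the global integral as $\varphi(e)$ times a product of local spherical transforms via bi-$\K$-invariance of the kernel and Schur's lemma, apply Theorem~\ref{non-triv} and Theorem~\ref{ramified2} (together with its corollary) for the analytic continuation and uniform bounds, and integrate by parts against the self-adjoint $\bDelta$ to produce the $\bLambda(\varphi)^{-k}$ decay. The paper does not write out this argument, but your reconstruction is the correct one.

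Two small remarks. First, when you write that ``the same factorization applies with the archimedean local factors replaced by $\mathcal{J}_{v,\bDelta^k}(\bfs,\varphi_{\pi_v})$,'' be careful: if $F$ has more than one archimedean place, $\bDelta^k$ does not factor as a product of local operators, but applied to the product $\prod_{v\mid\infty}H_v(\bfs,\cdot)^{-1}$ it expands (by the Leibniz rule) into a finite sum of products, each local factor being $\partial_vH_v(\bfs,\cdot)^{-1}$ for some $\partial_v$ in the universal enveloping algebra; each term is then handled by Theorem~\ref{ramified2}(2), so the final bound is unchanged, but the intermediate display should be a finite sum. Second, the ``technicality'' you flag in your last paragraph is not really a gap: the identity $\int_{\G(\A)}f(g)\varphi(g)\,\mathrm dg = \widehat{f}(\pi)\varphi(e)$ for bi-$\K$-invariant $f$ and $\varphi$ the spherical vector of an irreducible $\pi=\otimes'_v\pi_v$ is standard, valid wherever the integrals converge absolutely, and $\widehat{f}(\pi)=\prod_v\int_{\G(F_v)}f_v\varphi_{\pi_v}$ by the uniqueness of the local spherical vector at each place; since $\cT_{\gg 0}$ lies inside the region of absolute convergence, the factorization holds on the initial domain and then provides the analytic continuation. (You also inherit the paper's slip in stating the continuation region as $\cT^\cD_{-1/2}$; as the estimate in the second half of the corollary and the statement of Theorem~\ref{important-theorem} make clear, the region supplied by Theorem~\ref{non-triv} is $\cT^\cD_{-1/2r}$, and since $1-1/2r>1/2$ the latter is the smaller, correct set.)
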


\section{Height zeta function}
\label{sect:heightzeta}

The main tool in the study of
distribution properties of $(S, \cD)$-integral points is the
height zeta function, defined by
$$
\zZ_{S, \cD}({\bf s}, g) := 
\sum_{\gamma \in \G(F)} \bdelta_{S,\cD}(\gamma g) H({\bf s},\gamma g)^{-1}.
$$

\begin{prop}
\label{prop:abs-conv} 
The series defining 
$\zZ_{S, \cD}({\bf s},g)$ converges absolutely to a holomorphic function for ${\bf s}
\in \cT_{\gg 0}$. In its region of convergence
$$
\zZ_{S, \cD}({\bf s},g) \in \mathsf C^\infty (\G(F) \backslash \G(\A)).
$$
and all of its group derivatives are in $\sL^2$. Moreover, in this domain, 
we have a spectral expansion
\begin{equation}
\label{zeta-identity-modified2}
{\sum_{\chi \in \mX,\P}} \frac{1}{n(\sA)}
\int_{\Pi(\M)}\int_{\G(\A)}
\bdelta_{S, \cD}(g')H({\mathbf s},g' )^{-1}\left(\sum_{\phi \in \cB_\P(\pi)_{\chi}} E(g, \phi) 
\overline{E(g',\phi)}\right) \, \mathrm dg'\,
\mathrm d\pi,
\end{equation}
in the notations of \cite[Section 3]{STT}. 
\end{prop}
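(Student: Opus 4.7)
The plan is to address the three claims in order.

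For absolute convergence in the tube $\cT_{\gg 0}$, the strategy is to dominate the discrete sum by an adelic integral. Choose a relatively compact symmetric neighborhood $U$ of the identity in $\G(\A)$ so that the translates $\gamma U$, $\gamma\in\G(F)$, are pairwise disjoint (possible by discreteness of $\G(F)$). Continuity of the local heights gives $H(\mathbf s,\gamma g u)^{-1}\asymp H(\mathbf s,\gamma g)^{-1}$ uniformly for $u\in U$, hence
$$
\sum_{\gamma\in\G(F)} H(\mathbf s,\gamma g)^{-1} \;\le\; C(\mathbf s, g)\int_{\G(\A)} H(\mathbf s, g')^{-1}\,\mathrm dg'.
$$
By Theorem~\ref{thm:one-dim} applied to the trivial character (combined with Theorem~\ref{ramified} at archimedean places), the adelic integral equals a product of Hecke $L$-functions evaluated at $s_\alpha-\kappa_\alpha$ times a bounded holomorphic factor, and therefore converges absolutely for $\mathbf s\in\cT_{\gg 0}$. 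Since $\bdelta_{S,\cD}\le 1$, the same bound holds for $\zZ_{S,\cD}$, and holomorphy in $\mathbf s$ follows from uniform convergence on compacts.

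For smoothness of $\zZ_{S,\cD}(\mathbf s,\cdot)$ and the $\sL^2$ property of its group derivatives, note that $H$ and $\bdelta_{S,\cD}$ are right $\K$-invariant at all places, so only archimedean directions contribute. For a left-invariant differential operator $\partial$ on $\G(F_\infty)$, apply Step~1 to $\partial H^{-1}$, substituting Theorem~\ref{ramified}(2) for (1) at archimedean places, to justify termwise differentiation and bound each derivative. The remaining argument—deducing $\sL^2$-membership on $\G(F)\backslash\G(\A)$—then parallels the treatment of the ordinary zeta function $\sum_\gamma H(\mathbf s,\gamma g)^{-1}$ in \cite[Section~3]{STT}, the factor $\bdelta_{S,\cD}$ entering only as a bounded multiplier which does not affect convergence or the resulting $\sL^2$ estimates.

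For the spectral expansion, $\zZ_{S,\cD}(\mathbf s,\cdot)$ lies in $\mathsf C^\infty\cap\sL^2(\G(F)\backslash\G(\A))$ with all derivatives in $\sL^2$, so the Langlands spectral decomposition applies and represents it as a sum over associate classes of parabolics $\P$, an integration over $\pi\in\Pi(\M)$, and a finite sum over orthonormal bases $\cB_\P(\pi)_\chi$, producing the kernel $\sum_{\phi}E(g,\phi)\overline{E(g',\phi)}$ inside the parentheses of the displayed formula. The spectral coefficient, namely the inner product of $\zZ_{S,\cD}$ against $E(\cdot,\phi)$, unfolds via the sum-over-$\G(F)$ definition and the left $\G(F)$-invariance of $E$ into the integral $\int_{\G(\A)} \bdelta_{S,\cD}(g')H(\mathbf s,g')^{-1}\overline{E(g',\phi)}\,\mathrm dg'$; the exchange of sum and integral is legitimate by Step~1. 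The main technical obstacle is precisely this unfolding against Eisenstein series attached to proper parabolics: it requires the unfolded adelic integral to converge absolutely, not only in the sense of the $\sL^2$ inner product. The uniform bounds on local integrals against matrix coefficients of infinite-dimensional representations developed in Section~\ref{sect:regularintegral} are exactly what will secure this absolute convergence.
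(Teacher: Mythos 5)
Your proof is correct in substance but takes a longer route than the paper. The paper's proof is a single observation: since $\bdelta_{S,\cD}(\gamma g)\in\{0,1\}$ for each $\gamma$, the series defining $\zZ_{S,\cD}(\mathbf s,g)$ is a \emph{subsum} of the series defining the rational-point height zeta function $\zZ(\mathbf s,g)=\sum_\gamma H(\mathbf s,\gamma g)^{-1}$ already treated in \cite[Proposition 8.2]{STT}. Absolute convergence, holomorphy, smoothness, the $\sL^2$ property of all group derivatives, and the validity of the pointwise spectral expansion (the Arthur truncation argument) then transfer verbatim, because one has only discarded nonnegative terms and the modified coefficient is a bounded multiplier. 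Your packing argument (disjoint translates $\gamma U$ plus quasi-multiplicativity of $H$ to compare the sum to $\int_{\G(\A)}H(\mathbf s,g')^{-1}\,\mathrm dg'$) re-derives the same convergence from scratch; it is a legitimate alternative, but it reproves [STT] rather than leaning on it as the paper does.

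One remark deserves correction. You assert that the bounds from Section~\ref{sect:regularintegral} are ``exactly what will secure this absolute convergence'' of the unfolded integral $\int_{\G(\A)}\bdelta_{S,\cD}(g')H(\mathbf s,g')^{-1}\overline{E(g',\phi)}\,\mathrm dg'$. For $\mathbf s\in\cT_{\gg 0}$, which is all Proposition~\ref{prop:abs-conv} asks for, this unfolding is handled by elementary means: $H(\mathbf s,\cdot)^{-1}$ decays faster than any moderate-growth automorphic form, so the integral converges absolutely by Theorem~\ref{ramified} together with the standard moderate-growth bound for Eisenstein series, without appeal to the spherical-function estimates. The uniform-in-$\pi$ bounds of Corollary~\ref{coro-inf-int} are what drive the \emph{analytic continuation} of the spectral tail into $\cT_{-1/2r}^\cD\cap\cT_{-1-1/2r}$ in Theorem~\ref{important-theorem}; they are not required to establish the expansion in the initial tube $\cT_{\gg 0}$.
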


\begin{proof}
Identical to the proof of \cite[Proposition 8.2]{STT}; 
it suffices to observe that $\zZ_{S,D}$ is a {\em subsum} 
of the series defining the height zeta 
function for rational points considered in \cite{STT}. 
\end{proof}

Let $\mathcal X=\mathcal X(\G)$ be the set of unramified 
automorphic characters of $\G$, i.e., continuous homomorphisms $\G(\A)\ra \mathbb S^1$, 
invariant under $\G(F)$ and $\K$ on both sides. 
We specialize to $g=e$, 
the identity in $\G(\A)$, and obtain
$$
\zZ_{S,D}({\mathbf s}) := \zZ_{S,D}({\mathbf s},e)=\sum_{\chi\in \mathcal X(\G)} 
\int_{\G(\A)} \bdelta_{S, \cD}(g)H({\mathbf s},g )^{-1} \chi(g) \mathrm dg +
S^\flat({\mathbf s}), 
$$
and $S^\flat({\mathbf s})$ is the subsum in Equation~\ref{zeta-identity-modified2}
corresponding to infinite dimensional representations (restricted to $g=e$).

We use this expansion to determine the analytic behavior of the
height zeta function. 
The innermost sum  in the definition of  $S^\flat({\mathbf s})$  is uniformly
convergent for $g'$ in compact sets, see the first half
of the proof of Lemma 4.4 of \cite{Arthur}. Therefore, we may
interchange the innermost summation with the integral over
$\G(\A)$ and find that $S^\flat({\mathbf s})$ equals
\begin{equation*}
{\sum_{\chi \in \mX,\P}}^{\!\!\flat} \frac{1}{n(\sA)}
\int_{\Pi(\M)}\left(\sum_{\phi \in \cB_\P(\pi)_{\chi}} E(e,\phi) 
\int_{\G(\A)}\bdelta_{S,\cD}(g')H({\mathbf s},g' )^{-1} \overline{E(g', \phi)} \, 
\mathrm dg'\right) \, \mathrm d\pi.
\end{equation*}

\begin{thm}
\label{important-theorem}
The function $S^\flat$ admits an analytic continuation to a function
which is holomorphic on $\cT_{-1/2r}^\cD \cap \cT_{-1-1/2r}$, where $r$ is the rank of $\G$. 
\end{thm}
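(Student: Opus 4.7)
The plan is to carry out term-by-term analytic continuation using Corollary \ref{coro-inf-int}, and then to control the resulting expansion uniformly by exploiting rapid decay in the Laplacian eigenvalues. Specifically, for each $\chi, \P, \pi$, and each $\phi \in \cB_\P(\pi)_\chi$, the Eisenstein series $g'\mapsto E(g',\phi)$ is an automorphic form right-invariant under $\K$, so we may apply Corollary \ref{coro-inf-int} to conclude that
\begin{equation*}
\mathcal{J}_{S,\cD}(\bfs, \overline{E(\cdot,\phi)}) = \int_{\G(\A)} \bdelta_{S,\cD}(g') H(\bfs,g')^{-1} \overline{E(g',\phi)}\,\mathrm dg'
\end{equation*}
extends to a holomorphic function on $\cT^\cD_{-1/2r}\cap \cT_{-1-1/2r}$, with the key bound
\begin{equation*}
|\mathcal{J}_{S,\cD}(\bfs, \overline{E(\cdot,\phi)})| \leq C(\eps,K,k)\, \bLambda(\phi)^{-k}\, |E(e,\phi)|,
\end{equation*}
uniformly on compact subsets $K\subset\cT^\cD_{-1/2r+\eps}\cap \cT_{-1-1/2r+\eps}$, for any integer $k>0$.

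Having done this pointwise in the spectral parameters, the task reduces to showing that the resulting expression
\begin{equation*}
{\sum_{\chi\in\mX,\P}}^{\!\!\flat}\frac{1}{n(\sA)}\int_{\Pi(\M)}\sum_{\phi\in\cB_\P(\pi)_\chi} E(e,\phi)\,\mathcal{J}_{S,\cD}(\bfs,\overline{E(\cdot,\phi)})\,\mathrm d\pi
\end{equation*}
converges absolutely and uniformly on compacta inside $\cT^\cD_{-1/2r}\cap \cT_{-1-1/2r}$, hence defines a holomorphic function there. Substituting the bound above, the problem becomes the uniform convergence of
\begin{equation*}
{\sum_{\chi,\P}}^{\!\!\flat}\frac{1}{n(\sA)}\int_{\Pi(\M)}\sum_{\phi} |E(e,\phi)|^2\,\bLambda(\phi)^{-k}\,\mathrm d\pi,
\end{equation*}
which is independent of $\bfs$ and which one handles by standard automorphic input: a Sobolev-type estimate bounding $|E(e,\phi)|$ polynomially in $\bLambda(\phi)$, together with a Weyl-law type count of the multiplicities, lets one absorb everything by taking $k$ sufficiently large. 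This step is exactly parallel to the argument used in \cite[Proposition 8.2 and the analysis following it]{STT}, and in the proof of \cite[Lemma 4.1]{Arthur}, which is why the basis $\cB_\P(\pi)_\chi$ is chosen to be diagonal for $\bDelta$.

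The main obstacle, and the reason the hypothesis $r\geq 2$ is essential via Proposition \ref{prop:n} and Corollary \ref{coro:n}, is precisely the extraction of enough decay in $\bLambda(\phi)$ in Corollary \ref{coro-inf-int}: without the Oh-type bound, the integral $\mathcal{J}_v(\bfs,\pi_v)$ in Theorem \ref{non-triv} would only converge on a smaller tube, and one would lose the room $-1/2r$ that is needed to get past the polar divisors of the Eisenstein and Hecke $L$-contributions on $\cT^\cD_0$. Once absolute convergence of the spectral sum is secured on $\cT^\cD_{-1/2r+\eps}\cap \cT_{-1-1/2r+\eps}$ for every $\eps>0$, letting $\eps\to 0$ shows that $S^\flat$ is holomorphic on the claimed open set, completing the argument.
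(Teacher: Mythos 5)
Your proposal follows the paper's proof essentially step for step: rewrite $S^\flat$ so each Eisenstein term is fed through Corollary~\ref{coro-inf-int}, obtain the uniform bound $|{\mathcal J}_{S,\cD}(\bfs, E(\phi,\cdot))| \le C(\eps,K,k)\bLambda(\phi)^{-k}|E(e,\phi)|$ on compacta in $\cT^\cD_{-1/2r+\eps}\cap\cT_{-1-1/2r+\eps}$, and then reduce holomorphy of the spectral sum to the convergence of $\sum^\flat \int \sum_\phi \bLambda(\phi)^{-k}|E(e,\phi)|^2\,\mathrm d\pi$, which the paper disposes of by citing \cite[Proposition 3.5]{STT} while you describe the same input in slightly different terms (Sobolev bound plus Weyl-law count, and the pointer to \cite[Lemma 4.1]{Arthur}). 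The argument is correct and the route is the same; your extra paragraph explaining why $r\ge 2$ is what buys the margin $-1/2r$ via the Oh bound is accurate context rather than a deviation.
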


\begin{proof} 
By assumption,  the height function
is invariant under  right and left translation by the maximal compact
subgroup $\K$. By \cite[Corollary 4.1]{STT}, we have
\begin{equation}
S^\flat({\mathbf s}) = {\sum_{\chi \in \mX,\P}}^{\!\!\flat}
\frac{1}{n(\sA)} \int_{\Pi(\M)}\left(\sum_{\phi \in \cB_\P(\pi)_{\chi}}
E(e, \phi) {\mathcal J}_{S, \cD}(\bfs, E(\phi, \cdot))\right) \,
\mathrm d\pi, 
\end{equation} 
where ${\mathcal J}_{S, \cD}(\bfs, E(\phi, \cdot))$ is 
as in Corollary~\ref{coro-inf-int}.
Let $K\subset \cT_{-1/2r+\eps}^\cD \cap \cT_{-1-1/2r + \eps}$, with $\eps>0$,  
be a compact subset.
By Corollary~\ref{coro-inf-int}, for $\bfs \in K$ and all $k$ the
expression
\begin{equation*}
{\sum_{\chi \in \mX,\P}}^{\!\!\flat}  \frac{1}{n(\sA)}
\int_{\Pi(\M)}\left(\sum_{\phi \in \cB_\P(\pi)_{\chi}} |E(e,
\phi)|\cdot| {\mathcal J}_{S, \cD}(\bfs, E(\phi, \cdot))|\right)
\, \mathrm d\pi
\end{equation*}
is bounded by
\begin{equation*}
C(\eps, K, k) {\sum_{\chi  \in \mX,\P}}^{\!\!\flat}  \frac{1}{n(\sA)}
\int_{\Pi(\M)}\left(\sum_{\phi \in \cB_\P(\pi)_{\chi}}
\bLambda(\phi)^{-k} |E(e, \phi)|^2 \right) \, \mathrm d\pi,
\end{equation*}
where $\bLambda(\phi)$ is eigenvalue of the Laplacian $\bDelta$ 
as in Corollary~\ref{coro-inf-int}. 
The convergence of the last expression follows from
\cite[Proposition 3.5]{STT}.
\end{proof}


\begin{coro}
\label{coro:holo}
The height zeta function $\zZ_{S,D}(\mathbf s) = \zZ_{S,D}(\mathbf s, e)$ is holomorphic for 
$\Re(\mathbf s)\in -(K_X+D)  + \Lambda_{\rm eff}(X)^{\circ}$.
\end{coro}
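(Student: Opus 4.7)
The plan is to exploit the spectral decomposition furnished by Proposition~\ref{prop:abs-conv}, specialized at $g=e$:
\begin{equation*}
\zZ_{S,\cD}(\mathbf s) = \sum_{\chi \in \mathcal X} \mathcal J(\mathbf s, \chi) + S^\flat(\mathbf s), \qquad \mathcal J(\mathbf s, \chi) := \int_{\G(\A)} \bdelta_{S,\cD}(g) H(\mathbf s,g)^{-1} \chi(g)\,\mathrm dg,
\end{equation*}
and to treat the two summands separately. First I would translate the cone-theoretic condition $\Re(\mathbf s) \in -(K_X+\cD) + \La_{\rm eff}(X)^\circ$ into explicit inequalities on the coordinates $(s_\alpha)$. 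For the wonderful compactification of a split adjoint semi-simple group, $\Pic(X)_\R$ is spanned freely by the boundary divisors $\{D_\alpha\}_{\alpha \in \mathcal A}$ with $\La_{\rm eff}(X)$ equal to the corresponding positive orthant; combined with $-K_X = \sum_\alpha (\kappa_\alpha+1)D_\alpha$ and $\cD = \sum_{\alpha \in \mathcal A_\cD} D_\alpha$, the target region is precisely
\begin{equation*}
\Re(s_\alpha) > \kappa_\alpha \text{ for } \alpha \in \mathcal A_\cD, \qquad \Re(s_\alpha) > \kappa_\alpha+1 \text{ for } \alpha \notin \mathcal A_\cD.
\end{equation*}

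The $S^\flat$ contribution is immediate from Theorem~\ref{important-theorem}: it extends holomorphically to $\cT^\cD_{-1/2r} \cap \cT_{-1-1/2r}$, which strictly contains the target in both coordinate directions (since $\kappa_\alpha-1/2r < \kappa_\alpha$ and $\kappa_\alpha+1-1/2r < \kappa_\alpha+1$), and so contributes no singularity there.

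The main work is the one-dimensional piece. For each $\chi$ I would split $\mathcal J(\mathbf s, \chi) = \prod_{v \in S} \mathcal J_v(\mathbf s, \chi_v) \cdot \mathcal J_{S,\cD}(\mathbf s, \chi)$ and invoke Theorem~\ref{thm:one-dim} on both factors. Multiplying the two factorizations out, for every $\alpha \notin \mathcal A_\cD$ the local $L$-factors at $v \in S$ combine with the partial $L$-function from $\mathcal J_{S,\cD}$ to produce a complete Hecke $L$-function $L(s_\alpha - \kappa_\alpha, \chi \circ \check\alpha)$, whose only possible pole sits at $s_\alpha = \kappa_\alpha+1$, exactly on the boundary excluded by passing to the strict interior of the effective cone. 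For $\alpha \in \mathcal A_\cD$ only the finite product $\prod_{v \in S} L_v(s_\alpha - \kappa_\alpha, \chi_v \circ \check\alpha)$ survives, whose poles lie on $\Re(s_\alpha) = \kappa_\alpha$, again only on the boundary. The auxiliary correction factors $f_v$ (holomorphic on $\cT_{-1-\delta}$) and $f_{S,\chi}$ (holomorphic on $\cT^\cD_{-1/2}$) are harmless, since both regions contain the target. Because $\mathcal X(\G)$ is finite for split adjoint semi-simple $\G$, the sum over $\chi$ inherits holomorphy on the target region.

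The main bookkeeping challenge is assembling the completed Hecke $L$-functions for $\alpha \notin \mathcal A_\cD$ from the product of local $L$-factors at $v \in S$ with the partial $L$-function outside $S$, and verifying that the surviving local factors at $\alpha \in \mathcal A_\cD$ — including archimedean $\Gamma$-factors — have all their poles on or to the left of $\Re(s_\alpha) = \kappa_\alpha$, so that no pole invades the strict interior of $-(K_X+\cD) + \La_{\rm eff}(X)^\circ$. Once these verifications are in hand, adding the $S^\flat$ and one-dimensional contributions yields the corollary.
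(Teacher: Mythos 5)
Your proof is correct and reconstructs precisely the argument the paper leaves implicit: decompose via Proposition~\ref{prop:abs-conv} at $g=e$, dispose of $S^\flat$ by Theorem~\ref{important-theorem}, and for the finitely many $\chi\in\mathcal X(\G)$ use Theorem~\ref{thm:one-dim} to assemble complete Hecke $L$-functions for $\alpha\notin\cA_\cD$ (pole only on $\Re(s_\alpha)=\kappa_\alpha+1$) and finite products of local factors (non-archimedean poles on $\Re(s_\alpha)=\kappa_\alpha$, archimedean $\Gamma$-poles to the left) for $\alpha\in\cA_\cD$. Your translation of $-(K_X+D)+\La_{\rm eff}(X)^\circ$ into the coordinate inequalities is exactly the convention of \cite[Section 5]{STT} used throughout the paper, so the corollary follows.
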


\section{The leading pole}
\label{sect:poles}

We now establish the analog of Manin's conjecture in the context of integral points, proposed
and proved in special cases in \cite{C-T-integral}, \cite{C-T-Ga}, \cite{C-T-Torus}.  
For 
$$
\lambda=\sum_{\alpha\in \mathcal A} \lambda_{\alpha} \alpha \in \Lambda_{\rm eff}(X)^{\circ},
$$ 
the interior of the effective cone of $X$, set
\begin{equation}
a(\lambda) = \max\left(\max_{\alpha\in\mathcal A_{\cD}} \frac{
\kappa_{\alpha}}{\lambda_\alpha}, \max_{\alpha\not\in\mathcal A_{\cD}} \frac{
\kappa_{\alpha}+1}{\lambda_\alpha} \right).
\end{equation}
Let 
$$
\cA(\lambda)=\mathcal A(\lambda, D)
$$ 
be the set of $\alpha$, for which the
maximum is achieved and  $r(\lambda) = \# \cA(\lambda)$ its cardinality. Put
$$
\mathcal A_D(\lambda) = 
\mathcal A(\lambda) \cap \mathcal A_D, \quad d(\lambda) = \# \mathcal A_D(\lambda).
$$
Theorem
\ref{important-theorem} implies that $\zZ_{S,\cD}(s\lambda)$ 
has no pole for $\Re(s) > a(\lambda)$ and that possible contributions
to the right most poles come from one-dimensional
automorphic characters.

Recall that given an automorphic character 
$\chi$ of $\G(\A)$ and an $\alpha \in \Delta(\G, \T)$ 
we can define a Hecke character $\xi_\alpha(\chi)$ of $\mathbb G_m(\A)$ by 
$$
\xi_\alpha(\chi) = \chi \circ \check\alpha.  
$$
Then if $\chi = \otimes_v'\chi_v$, we have $\xi_\alpha(\chi) = \otimes_v'\xi_{\alpha, v}(\chi_v)$ with 
$$
\xi_{\alpha, v}(\chi_v) = \chi_v \circ \check\alpha.
$$

\

We are only interested in those automorphic $\chi$ which satisfy
\begin{equation}
\int_{\G(\A)}\bdelta_{S, \cD}(g) H(s\lambda, g)^{-1} \chi(g) \, \mathrm dg \ne 0
\end{equation}
for some $s$ in the domain of absolute convergence. This implies
that $\chi$ is right, and in this case also left, invariant under
the maximal compact subgroup $\sK$ of $\G(\A)$, i.e., $\chi\in \mathcal X(\G)$.

\begin{defn}
\label{defn:chi}
Let $\mathcal X_{S, \cD, \lambda}(\G)\subset \mathcal X(\G)$ 
be the collection of all characters $\chi=\otimes'_v\chi_v$ 
such that 
\begin{itemize}
\item 
for all $\alpha \in \cA(\lambda) \setminus \cA_\cD(\lambda)$ and all $v\notin S$, 
we have $\xi_{\alpha,v}(\chi_v) \equiv 1$;
\item 
for all $\alpha \in \cA_\cD(\lambda)$, and $v \in S$, we have $\xi_{\alpha, v}(\chi_v) \equiv 1$. 
\end{itemize}
\end{defn}

\

\begin{rem}
\label{rema:trivial}
If $D=\emptyset$ and $\lambda=-K_X$ then 
$\mathcal X_{S,D, \lambda}(\G)=1$, the trivial character, for any $S$ \cite[Proposition 8.6]{STT}.
If $D=\cup_{\alpha \in \cA}$ is the whole boundary and $S=S_{\infty}$ then 
$\mathcal X_{S,D,\lambda}(\G)$ is dual to the {\em class group} of $\G$, i.e., the quotient 
$$
{\rm cl}(\G):=\G(\A)/\G(F)\K\prod_{v\mid \infty} \G(F_v).
$$
This class group is trivial for groups of type $E_8$, $F_4$, and $G_2$, as the adjoint groups
are also simply-connected, and these have class number 1.   
\end{rem}

\begin{exam}
\label{weeteck}
If $\chi$ is an unramified character of a split semi-simple group
over a number field of class number one, then $\chi =1$;
this follows from  \cite[Lemma 4.7]{GMO}, 
and Corollary 2 on page 486 of \cite{pr}.

This may fail when the class number of $F$ is not 
equal to one.  E.g., let $F$ be a field with class number
two, and let $E$ be the Hilbert class field of $F$. Then $E/F$ is
an unramified quadratic extension. Let $\omega_{E/F}$ be the
corresponding quadratic character of $\A_F^\times$. Consider the
automorphic character of $\PGL(2)$ given by $\omega_{E/F} \circ
\det$. Then this automorphic character is unramified and 
trivial at the archimedean places. Such characters will contribute to the leading pole
of $\zZ_{S,D}$. 
\end{exam}

\

We have shown that
\begin{equation}
\zZ_{S, \cD}(s\lambda)=
\sum_{\chi\in \mathcal X(\G)}
\int_{\G(\A)}\bdelta_{S, \cD}(g)H(s\lambda, g)^{-1}\chi(g)\, \mathrm dg + f(s),
\end{equation}
with $f$ holomorphic for $\Re(s) >a(\lambda) - \delta$, for some $\delta >0$.
Theorem~\ref{thm:one-dim} combined with basic properties of Hecke
$L$-functions shows that for $\chi \in \mathcal X(\G)$ the integral
$$
\int_{\G(\A)} \bdelta_{S, \cD}(g) H_S(s\lambda, g)^{-1} \chi(g) \, \mathrm d g
$$
admits a regularization of the shape
$$ 
\prod_{\alpha \in \mathcal A(\lambda) \setminus \cA_\cD(\lambda)}  \!\!\!\!
L_S(s\lambda_\alpha - \kappa_\alpha,\xi_{\alpha}(\chi))\cdot h_{\chi}(s) \cdot 
\prod_{v\in S} \prod_{\alpha\in \cA_D(\lambda)}\!\!\!
L_v(s\lambda_\alpha - \kappa_\alpha, \xi_{\alpha,v}(\chi_v))\cdot h_{\chi,v}(s), 
$$
with $h_{\chi}$ and $h_{\chi,v}$ holomorphic for $\Re(s) > a(\lambda)-\delta$, for
some $\delta >0$. 
It follows that only $\chi\in\mathcal X_{S, \cD, \lambda}(\G)$ 
contribute to the leading term at $s=a(\lambda)$. 
By Poisson summation formula, we can rewrite this contribution as 
$$
|\mathcal X_{S,D,\lambda}(\G)|\cdot 
\int_{\G(\A)^{{\rm Ker}_\lambda}} \bdelta_{S, \cD}(g)H(s\lambda, g)^{-1}\, \mathrm dg,
$$
where 
$$
\G(\A)^{{\rm Ker}_\lambda}:= \cap_{\chi\in \mathcal X_{S,D,\lambda}(\G)} \Ker(\chi)
$$
is the intersection of the kernels of automorphic characters.

\begin{thm}
\label{thm:main2}
The number of $(S, D)$-integral point of bounded height with respect to $\lambda$ is
asymptotic to
$$
c\cdot B^{a(\lambda)}\log(B)^{b(\lambda)-1}(1+o(1)), \quad \quad B\ra \infty,  
$$
where 
$$
b(\lambda)=  r(\lambda)-d(\lambda) + \sum_{v\in S} d(\lambda), 
$$
and 
\begin{equation}
\label{eqn:ccc}
c =\frac{1}{a(\lambda)(b(\lambda)-1)!}\cdot |\mathcal X_{S,D,\lambda}(\G)| \cdot
\int_{\G(\A)^{{\rm Ker}_\lambda}} \bdelta_{S, \cD}(g)H(s\lambda, g)^{-1}\, \mathrm dg >0.
\end{equation}
\end{thm}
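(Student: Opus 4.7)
The plan is to combine the spectral expansion of $\zZ_{S,\cD}(s\lambda)$ from Proposition~\ref{prop:abs-conv} with a Tauberian theorem applied to the height zeta function restricted to the ray $\{s\lambda\mid s\in\C\}$. By Theorem~\ref{important-theorem}, the infinite-dimensional contribution $S^\flat(s\lambda)$ is holomorphic in a half-plane strictly to the left of $\Re(s)=a(\lambda)$, so the whole polar behavior at $s=a(\lambda)$ is carried by the one-dimensional characters $\chi\in\mathcal X(\G)$. For each such $\chi$, Theorem~\ref{thm:one-dim} expresses the corresponding adelic integral as
\[
\mathcal J_{S,\cD}(s\lambda,\chi)\cdot \prod_{v\in S}\!\!\int_{\G(F_v)}\!\!H_v(s\lambda,g_v)^{-1}\chi_v(g_v)\,\mathrm dg_v,
\]
which factors, up to functions holomorphic and bounded on $\cT^\cD_{-1/2+\eps}$, as a product of global Hecke $L$-factors $L_S(s\lambda_\alpha-\kappa_\alpha,\xi_\alpha(\chi))$ over $\alpha\notin\cA_\cD$ and local $L$-factors $L_v(s\lambda_\alpha-\kappa_\alpha,\xi_{\alpha,v}(\chi_v))$ over $\alpha\in\cA_\cD$ and $v\in S$.

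Next I would count the order of the pole at $s=a(\lambda)$ produced by each $\chi$. A global factor $L_S(s\lambda_\alpha-\kappa_\alpha,\xi_\alpha(\chi))$ contributes a pole at $s=a(\lambda)$ precisely when $\alpha\in\cA(\lambda)\setminus\cA_\cD(\lambda)$ \emph{and} $\xi_\alpha(\chi)$ is trivial on $\G(\A_S)$; otherwise it is holomorphic and bounded there. A local factor $L_v(s\lambda_\alpha-\kappa_\alpha,\xi_{\alpha,v}(\chi_v))$ for $v\in S$ and $\alpha\in\cA_\cD(\lambda)$ contributes a pole exactly when $\xi_{\alpha,v}(\chi_v)$ is trivial. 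Matching these conditions with Definition~\ref{defn:chi}, only the characters $\chi\in\mathcal X_{S,\cD,\lambda}(\G)$ survive, and each contributes a pole of total order
\[
\bigl(r(\lambda)-d(\lambda)\bigr)+\sum_{v\in S}d(\lambda)=b(\lambda).
\]
Summing over $\chi\in\mathcal X_{S,\cD,\lambda}(\G)$ and using the character orthogonality identity
\[
\sum_{\chi\in\mathcal X_{S,\cD,\lambda}(\G)}\!\!\!\chi(g)=|\mathcal X_{S,\cD,\lambda}(\G)|\cdot\mathbf 1_{\G(\A)^{\mathrm{Ker}_\lambda}}(g),
\]
identifies the leading singular term of $\zZ_{S,\cD}(s\lambda)$ as
\[
\frac{|\mathcal X_{S,\cD,\lambda}(\G)|\cdot C_\lambda}{(s-a(\lambda))^{b(\lambda)}}+\text{lower order},
\qquad C_\lambda=\Bigl[(s-a(\lambda))^{b(\lambda)}\!\!\int_{\G(\A)^{\mathrm{Ker}_\lambda}}\!\!\!\bdelta_{S,\cD}(g)H(s\lambda,g)^{-1}\mathrm dg\Bigr]_{s=a(\lambda)}.
\]

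Finally I would apply a Tauberian theorem (in the form used in \cite{STT}) that takes as input the meromorphic continuation of a Dirichlet series past its abscissa of convergence, together with polynomial growth estimates in vertical strips. Growth control is supplied by Theorem~\ref{important-theorem} for $S^\flat$, by convexity bounds for Hecke $L$-functions for the global Euler products, and by Theorem~\ref{thm:one-dim} and Corollary~\ref{coro-inf-int} for the remaining holomorphic factors; this also yields the explicit error term promised in the introduction. The Tauberian output for a pole of order $b(\lambda)$ at $s=a(\lambda)$ converts the residue into the asymptotic $c\cdot B^{a(\lambda)}\log(B)^{b(\lambda)-1}(1+o(1))$ with the constant $c$ of formula~\eqref{eqn:ccc}.

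The main obstacle is the uniform control over the character sum: while for a fixed $\chi$ the factorization is clean, one must show that the contributions of $\chi\notin\mathcal X_{S,\cD,\lambda}(\G)$ can be bundled into a function holomorphic (and of moderate growth) in a strip $\Re(s)>a(\lambda)-\delta$, which requires simultaneous non-vanishing/holomorphy statements for the Hecke $L$-functions $L(s,\xi_\alpha(\chi))$ on the line $\Re(s)=1$ together with uniform bounds as $\chi$ varies, and a careful check that the class-group-type finiteness of $\mathcal X_{S,\cD,\lambda}(\G)$ (compare Remark~\ref{rema:trivial} and Example~\ref{weeteck}) makes the remaining non-polar sum convergent.
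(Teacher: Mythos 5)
Your proposal follows essentially the same route as the paper: spectral expansion (Proposition~\ref{prop:abs-conv}), holomorphy of $S^\flat$ (Theorem~\ref{important-theorem}), the factorization of Theorem~\ref{thm:one-dim} to isolate poles carried by Hecke $L$-functions, collapse of the character sum via orthogonality (the paper phrases the same step as Poisson summation), and a Tauberian theorem of the type used in \cite{STT} and \cite{C-T-integral}. Two points should be corrected, though.

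First, the difficulty you flag in your last paragraph --- uniform control over the sum on $\chi\in\mathcal X(\G)$ and simultaneous non-vanishing/bounds for the family $L(\cdot,\xi_\alpha(\chi))$ --- is not actually an obstruction in this setting. For $\G$ split semi-simple of adjoint type, $\mathcal X(\G)$ is a \emph{finite} group: an unramified automorphic character is trivial on $\G(F)$ and on $\K=\prod_v\K_v$, hence factors through a finite class-group-type quotient of $\G(\A)$ (cf.\ Remark~\ref{rema:trivial} and Example~\ref{weeteck}). So one only ever deals with a finite sum of characters, and for each $\chi\notin\mathcal X_{S,\cD,\lambda}(\G)$ the non-vanishing of each individual $L(1,\xi_\alpha(\chi))$ already yields holomorphy of that term in a half-plane $\Re(s)>a(\lambda)-\delta_\chi$; taking the minimum of finitely many $\delta_\chi$ disposes of the issue. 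Your proposed reduction should state this finiteness explicitly rather than worry about a uniform family estimate.

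Second, and more seriously for the stated theorem, your proposal identifies the leading singular term but does not verify the strict positivity $c>0$ asserted in \eqref{eqn:ccc}. This is precisely the content of the paper's own proof: one must show
\[
\lim_{s\to a(\lambda)}(s-a(\lambda))^{b(\lambda)}\int_{\G(\A)^{\mathrm{Ker}_\lambda}}\bdelta_{S,\cD}(g)\,H(s\lambda,g)^{-1}\,\mathrm dg>0,
\]
and the argument proceeds by choosing finitely many $\gamma_j\in\G(F)$ with $\G(\A)=\sqcup_j\,\gamma_j\,\G(\A)^{\mathrm{Ker}_\lambda}$, observing that the integrals over the various cosets are mutually comparable, and thereby reducing to positivity of the same limit with the integral taken over all of $\G(\A)$, which is then read off from Theorem~\ref{thm:one-dim} applied to the trivial character. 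Without this coset comparison your argument only gives $c\geq 0$, which does not justify that the displayed asymptotic is the true main term.
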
 

\begin{proof}
We adopt the proof in \cite[Theorem 9.2]{STT}, combining it with
a Tauberian theorem as in \cite[Theorem A.15]{C-T-integral}. 
It suffices to establish that the limit
$$
\lim_{s\ra a(\lambda)} (s- a(\lambda))^{b(\lambda)} 
\int_{\G(\A)^{{\rm Ker}_\lambda}}\bdelta_{S, \cD}(g) H(s\lambda, g)^{-1}\, \mathrm dg  >0.
$$
Note that there exists a finite set of $\gamma_j\in \G(F)$ 
such that 
$$
\G(\A) = \sqcup \,\, \gamma_j \G(\A)^{{\rm Ker}_\lambda}
$$
and that the local and global integrals over each of these cosets are comparable, 
upto a constant. In particular, it suffices to establish that 
$$
\lim_{s\ra a(\lambda)} (s- a(\lambda))^{b(\lambda)} 
\int_{\G(\A)}\bdelta_{S, \cD}(g) H(s\lambda, g)^{-1}\, \mathrm dg  >0,
$$
which follows from Theorem~\ref{thm:one-dim} and the definitions.
\end{proof}

We now specialize to the case when 
$\lambda=-(K_X+D)$, the log-anticanonical line bundle.
The first condition in Definition~\ref{defn:chi} 
implies that if $\chi \in \mathcal X_{S,D,\lambda}(\G)$ and
$\alpha \notin \cA_D$ then
$\xi_{\alpha,v}(\chi_v)\equiv 1$ for {\em all} $v$. 
Combining with the second condition, 
we see that 
$\xi_{\alpha, v}(\chi_v) \equiv 1$ for all $\alpha\in \cA$ and all $v\in S$. 
In particular, for $\chi\in \mathcal X_{S,D,\lambda}(\G)$ the integrals
$$
\int_{\G(\A)}  \bdelta_{S, \cD}(g)H(s\lambda, g)^{-1}\chi(g)\, \mathrm dg
$$
do not depend on $\chi$ and equal
$$
\int_{\G(\A_S)}  \bdelta_{S, \cD}(g)H(s\lambda, g)^{-1}\, \mathrm dg \cdot 
\prod_{v\in S}\int_{\G(F_v)} H_v(s\lambda, g_v)^{-1}\, \mathrm dg_v
$$
We can now describe the constant $c$ appearing in \eqref{eqn:ccc} 
in terms of Tamagawa-type constants. 
First,  we recall some notation. 
By \cite{concini-p83}, the boundary strata of $X\setminus \G$ are in 
bijection with subsets
$A\subset \cA$, i.e., there is a unique stratum 
$$
Z_{A}:=\cap_{\alpha\in A} D_{\alpha}.
$$ 
For split $\G$,
each such stratum $Z_{A}$ contains $F_v$-adic points. 

In the terminology of \cite[Section 3]{C-T-integral}, at each place $v$, 
the {\em analytic Clemens polytope}  $\mathcal C^{\rm an}_v(D)$ of
$D$ has a unique face of maximal dimension, it corresponds to $Z_{\cA_D}(F_v)$. 
Put $d:=\dim(\mathcal C^{\rm an}_v(D))+1$; we have $d=\#\cA_D$, the
codimension of the stratum  $Z_{\cA_D}$  
(see also \cite[Section 5.3.2]{C-T-integral}). 
In this situation, for each $v\in S$, 
there is a distinguished $v$-adic measure $\tau^{\rm max}_v$ on $D(F_v)$
considered in \cite[Section 4]{C-T-integral}. It is
supported on $Z_{\cA_D}(F_v)$ and the corresponding 
volumes are given by
$$
\tau^{\rm max}_v(D(F_v)) =\prod_{\alpha\in \cA_D} \frac{1}{\kappa_{\alpha}}  
\cdot \lim_{s\ra 1} (s-1)^d \int_{\G(F_v)} H_v(s\lambda,g_v)^{-1}\, \mathrm dg_v.
$$ 
Furthermore, there is an {\em adelic} measure on the integral adeles on $U:=X\setminus D$, 
which in our case takes the form:
$$
\tau^S_{(X,D)}(U(\A_S)^{\rm int}) = \prod_{\alpha\notin \cA_D} 
\frac{1}{\kappa_{\alpha}+1}\cdot 
\lim_{s\ra 1} (s-1)^{r-d} \int_{\G(\A_S)}\bdelta_{S}(g) H_S(s\lambda, g_v)^{-1} \, \mathrm dg. 
$$
It follows that 
\begin{equation}
\label{eqn:c}
c= \frac{1}{(b-1)!} \cdot |\mathcal X_{S,D,\lambda}(\G)|   \cdot \tau^S_{(X,D)}(U(\A_S)^{\rm int}) \cdot \prod_{v\in S} \tau^{\rm max}_v(D(F_v)),
\end{equation}
where 
$$
b:=(r-d) + \sum_{v\in S} d.
$$
Formula \eqref{eqn:c} interpolates between Peyre's 
Tamagawa-type constant for leading terms in asymptotics of rational points and 
the ``concentration of counting measures to the Satake boundary'' for asymptotics
for integral points on $\G$, established in \cite{GOS}.

\bibliographystyle{amsplain}
\bibliography{myfile2}

\end{document}